\newtheorem{thm}{Theorem}
\newtheorem{cor}[thm]{Corollary}
\newtheorem{prop}[thm]{Proposition}
\newtheorem{lem}[thm]{Lemma}
\theoremstyle{definition}
\newcommand{\R}{\mathbbm{R}}
\newcommand{\N}{\mathbbm{N}}
\newcommand{\Z}{\mathbbm{Z}}
\newcommand{\C}{\mathbbm{C}}
\newcommand{\coloring}{\mathcal{C}}
\newcommand{\cmap}{\varphi_{\alpha}}
\newcommand{\tiling}{\mathscr{T}}
\newcommand{\stiling}{\mathscr{S}_{\alpha}}
\newcommand{\qalpha}{Q_{\alpha}}
\DeclareMathOperator{\sg}{S}
\DeclareMathOperator{\rg}{R}
\DeclareMathOperator{\tg}{T}
\DeclareMathOperator{\Orth}{O}
\DeclareMathOperator{\interior}{int}
\begin{document}

	\author{Imogene F.~Evidente}
	\address[I.F.~Evidente]{Institute of Mathematics, University of the Philippines Diliman, 1101 Quezon City, Philippines}
	\email[Corresponding Author]{ifevidente@up.edu.ph}
	
	\author{Rene P.~Felix}
	\address[R.P.~Felix]{Institute of Mathematics, University of the Philippines Diliman, 1101 Quezon City, Philippines}
	\email{rene@math.upd.edu.ph}
	
	\author{Manuel Joseph C.~Loquias}
	\address[M.J.C.~Loquias]{Institute of Mathematics, University of the Philippines Diliman, 1101 Quezon City, Philippines, and Chair of Mathematics and Statistics, University of
	Leoben, Franz-Josef-Strasse 18, A-8700 Leoben, Austria}
	\email{mjcloquias@math.upd.edu.ph}

	\title[Symmetries and Color Symmetries of a Family of Tilings with a Singular Point]{Symmetries and Color Symmetries of\\[3pt]a Family of Tilings with a Singular Point}

	\begin{abstract}
		We obtain tilings with a singular point by applying conformal maps on regular tilings of the Euclidean plane, and determine its symmetries. 
		The resulting tilings are then symmetrically colored by applying the same conformal maps on colorings of regular tilings arising from sublattice colorings of the centers 
		of its tiles.  In addition, we determine conditions so that the coloring of a tiling with singularity that is obtained in this manner is perfect.
	\end{abstract}	
	
	\subjclass[2010]{52C20, 05B45, 52C05, 20H15}

	\keywords{tilings with singularity, regular tiling, conformal map, symmetry group, perfect coloring}

	\date{\today}

	\maketitle
	
	\section{Introduction and Outline}   

		A singularity of a tiling is a point $P$ for which any circular disk centered at $P$ intersects an infinite number of tiles. 
		Hence, tilings with singular points are not locally finite and are classified among tilings that are not ``well-behaved'' \cite{Grun}.
		Geometric and topological properties of tilings with singularities have been investigated in \cite{Bree,Niel,Sush}.
		
		Symmetric colorings of tilings with a singular point were obtained in \cite{Luck} by distorting certain colorings of regular Euclidean tilings.
		However, it was found that not all colorings of regular tilings could be transformed into colorings of tilings with a singular point, because some colorings were incompatible 
		with the tiling's rotational symmetry.
		Furthermore, it was surmised that there is a maximum number of colors for such colorings of a tiling with singularity.
		In Figure \ref{fig:(4^4)Example1a}, we have recreated a coloring of a tiling with a singularity from \cite[Figure 6]{Luck}. 
		Figure \ref{fig:(4^4)Example1b} shows the coloring of a regular tiling by squares from which Figure \ref{fig:(4^4)Example1a} was obtained. 
		Other colorings of the square tiling also compatible with the rotational symmetry of the tiling with singularity were found to have $1$, $2$, $5$, $10$, $25$ and $50$ colors only.
	
		\begin{figure}[ht]
			\begin{subfigure}[c]{0.4\textwidth}
				\includegraphics[height=4.5cm]{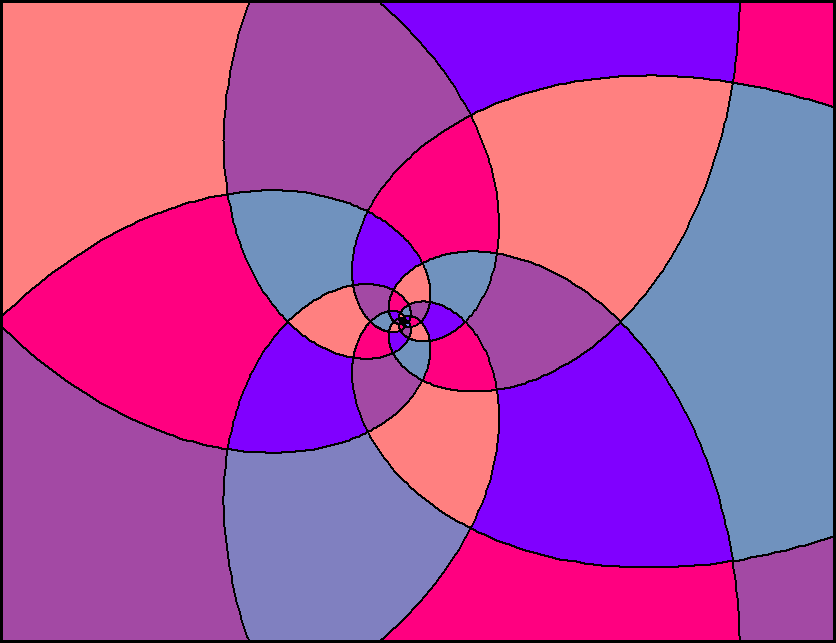}
				\caption{}
				\label{fig:(4^4)Example1a}
			\end{subfigure}
			\begin{subfigure}[c]{0.3\textwidth}
				\includegraphics[height=4.5cm]{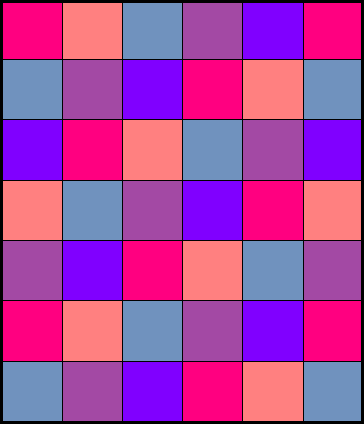}
				\caption{}
				\label{fig:(4^4)Example1b}
			\end{subfigure}
			\caption{Coloring of a tiling with a singularity and corresponding coloring of the $(4^4)$ tiling}
		\end{figure}
	
		In this contribution, we provide a mathematical basis for the observations made in \cite{Luck}. 
		We begin by introducing in Section \ref{sec:Preliminaries} the framework used in this paper, together with some known results.
		Section \ref{sec:ConformalMapping} discusses the conformal maps that are applied on regular Euclidean tilings which yield tilings with a singularity. 
		In Section \ref{sec:SymmetryGroup}, a correspondence between the the symmetry groups of the regular tilings and their images under the conformal maps is established.
		This allows us to determine the symmetry group of the tiling with singularity, which turns out to be isomorphic to a finite cyclic or finite dihedral group.  
		Section \ref{sec:ColorSymmetry} focuses on the symmetry of colorings of tilings with a singularity.
		We use a method similar to that in \cite{Luck} to obtain colorings of tilings with a singularity: conformal maps are applied on sublattice colorings of regular Euclidean tilings.
		However, applying a conformal map on a coloring of a regular tiling does not always yield a coloring of the corresponding tiling with singularity.
		Consequently, a compatibility condition between a sublattice coloring of a regular tiling and a conformal map is established.
		Furthermore, for colorings compatible with a conformal map, we identify a necessary and sufficient condition so that a symmetry of the uncolored tiling with singularity is
		a color symmetry of the resulting coloring of the tiling.		
		Lastly, we determine the maximum number of colors of a perfect coloring of the tiling with a singularity obtained in this manner.
		Several examples are presented in Section \ref{sec:Examples}.
		
	\section{Preliminaries}\label{sec:Preliminaries}

		A planar \emph{tile} is any set $T$ in the Euclidean plane that is the closure of its interior.
		Here, tiles are always bounded, and consequently, compact.
		A \emph{tiling} of $\R^2$ is a set of tiles $\mathscr{T}=\{ T_i\} _{i\in\N}$, such that ${\cup}_{i\in\N}T_i=\R^2$ and $\interior(T_i)\cap\interior(T_j)=\varnothing$ for all 
		$i\neq j$.
		That is, $\mathscr{T}$ is both a packing and a covering of $\R^2$.
		
		A tiling $\mathscr{T}$ of $\R^2$ is said to be \emph{locally finite} if for any given point $x\in\R^2$, every circular disk centered at $x$ intersects only a finite number of 
		tiles of $\mathscr{T}$  \cite{Grun,BaakeGrimm}.
		Hence, if $\mathscr{T}$ is not locally finite, then there exists at least one point $x_0\in\R^2$ such that for all $r>0$, $B(x_0,r)\cap T\neq\varnothing$ for an infinite 
		number of $T\in\mathscr{T}$.
		Such a point is called a \emph{singular point} of the tiling.
		
		We are primarily concerned with symmetries of tilings with a singularity at the origin that are obtained by applying conformal maps on the three types of regular tilings of 
		$\R^2$: the $(4^4)$ tiling (regular tiling by squares), the $(6^3)$ tiling (regular tiling by hexagons), and the $(3^6)$ tiling (regular tiling by triangles). Furthermore, we
		determine the color symmetries of colorings of tilings with a singularity obtained from certain colorings of regular tilings. To this end, the following definitions taken from 
		\cite{Grun} are needed.
		
		Let $X$ be a set of points or a set of tiles in $\R^2$, with symmetry group $\sg(X)$. We refer to the subgroup of $\sg(X)$ containing all direct symmetries of $X$ as the 
		\emph{rotation group} of $X$, and denote it by $\rg(X)$. Meanwhile, the subgroup of $\sg(X)$ containing all translation symmetries of $X$ is called the \emph{translation group} 
		of $X$, and is written $\tg(X)$.
		
		A \emph{coloring} of $X$ is a surjective map $\coloring_X:X\rightarrow K$, where $K$ is a finite set of $m$ colors.
		Equivalently, a coloring $\coloring_X$ of $X$ may be viewed as a partition $\{ X_i\} _{i=1}^m$ of $X$.
		A \emph{color symmetry} of $\coloring_X$ is an element of $\sg(X)$ that permutes the colors of $\coloring_X$.
		The set $\mathcal{H}_{\coloring_X}$ of all color symmetries of $\coloring_X$, that is
		\[\mathcal{H}_{\coloring_X}=\{h\in\sg(X)\mid \exists\text{ permutation }\sigma_h\text{ on }K\text{ such that }\forall\ x\in X,\ \coloring_X(h(x))=\sigma_h(\coloring_X(h))\},\]
		forms a subgroup of $\sg(X)$.
		We call this the \emph{color symmetry group} of the coloring \emph{$\coloring_X$}.
		If $\mathcal{H}_{\coloring_X}=\sg(X)$, then $\coloring_X$ is said to be a \emph{perfect coloring of} $X$.
		If  $\mathcal{H}_{\coloring_X}$ contains $\rg(X)$, then $\coloring_X$ is called a \emph{chirally perfect coloring of} $X$ \cite{Rigb}.
		To distinguish a perfect coloring from a chirally perfect coloring, we sometimes refer to a perfect coloring as a \emph{fully perfect coloring}.
		
		Of particular interest are \emph{sublattice colorings} of square and hexagonal lattices \cite{Mood,Pena} . 
		A sublattice $\Gamma$ of a lattice $\Gamma_0$ is a subgroup of finite index $m$ in $\Gamma_0$. 
		We assign a unique color to each coset of $\Gamma$ to obtain a coloring of $\Gamma_0$. 
		That is, the coloring of $\Gamma_0$ induced by $\Gamma$ is given by $\coloring_{\Gamma_0} = \{X_i\}_{i=1}^{m}$, where $X_i$, $i\in\{1,\ldots,m\}$,  is a coset of $\Gamma$. 
		Since the number of colors in the coloring of $\Gamma_0$ induced by $\Gamma$ is the index of $\Gamma$ in $\Gamma_0$,
		we sometimes refer to the number of colors as the \emph{color index} of the coloring. 
		Furthermore, the translation symmetries of $\Gamma_0$ are always contained in $\mathcal{H}_{\coloring_{\Gamma_0}}$ because they fix every coset of $\Gamma$ \cite[Theorem 1]{Pena}.
		
		Finally, we describe the setting that we will use throughout the paper.
		
		We identify the Euclidean plane $\R^2$ with the complex plane $\C$. 
		If $z\in\C$, we denote the modulus of $z$ by $|z|$, and its conjugate by $\bar{z}$. 
		We use $i$ and $\omega$ to represent $\sqrt{-1}$ and $e^{2\pi i/3}$, respectively. 
		Furthermore, we associate the square lattice with the ring $\Z[i]=\{a+bi\mid a,b\in\Z\}$ of Gaussian integers, while the hexagonal lattice is identified with the ring 
		$\Z[\omega]=\{a+b\omega\mid a,b\in\Z\}$ of Eisenstein integers. It is known that if $\xi=i$ or $\xi=\omega$, then $\Z[\xi]$ is a principal ideal domain.  Also, given
		$\beta\in\Z[\xi]$, the ideal $(\beta)$ generated by $\beta$ is of index $|\beta|^{2}$ in $\Z[\xi]$.
		
		Let $\tiling$ be a regular tiling, $P$ be the set of centers of tiles of $\tiling$, and $\Lambda$ be the orbit of the origin under the action of $\tg(\tiling)$.  We superimpose the
		complex plane over $\tiling$ in the following manner.
		\begin{description}
			\item[$\tiling$ is the $(4^4)$ tiling] The set $P$ is a square lattice. We superimpose $\C$ over $\tiling$ so that $P$ coincides with the points of $\Z[i]$. 
			Note that $\Lambda=\Z[i]$.
		
			\item[$\tiling$ is the $(6^3)$ tiling] The set $P$ forms a hexagonal lattice. We view $\tiling$ in $\C$ in such a way that $P$ coincides with $\Z[\omega]$, from which
			follows $\Lambda=\Z[\omega]$.
		
			\item[$\tiling$ is the $(3^6)$ tiling] The set $P$ is not a lattice, but nonetheless forms a crystallographic point packing \cite{ConwaySloane,BaakeGrimm}.  Superimpose $\C$ 
			over $\tiling$ so that the set $P_0$ of vertices of $\tiling$ coincides with the points of the ideal $(2+\omega)$ of $\Z[\omega]$. The two other cosets of $P_0$ in $\Z[\omega]$ 
			are $P_1=-1+(2+\omega)$ and $P_2=1+(2+\omega)$. Note that $P=P_1\cup P_2$, while $\Lambda =P_0$. This is illustrated in Figure \ref{(p0p1p2}.
		\end{description}
		
		\begin{figure}[ht]
			\centering
			\begin{tikzpicture}[scale=1.2]
				\draw[line width=1.5] (-2.58,2.98)--(2.58,2.98)--(2.58,-0.99)--(-2.58,-0.99)--(-2.58,2.98);
				\foreach \k in {-2,-1,0,1,2}
				\draw[line width=0.85] (.86*\k,-1)--(0.86*\k,3);
				\draw[line width=0.75] (-2.58,2.5)--(2.58,-.5);
				\draw[line width=0.75] (-2.58,1.5)--(1.72,-1);
				\draw[line width=0.75] (-2.58,0.5)--(0,-1);
				\draw[line width=0.75] (-2.58,-0.5)--(-1.72,-1);
				\draw[line width=0.75] (-1.72,3)--(2.59,0.5);
				\draw[line width=0.75] (2.58,1.5)--(0,3);
				\draw[line width=0.75] (1.72,3)--(2.58,2.5);
				\draw[line width=0.75] (-1.72,3)--(-2.58,2.5);
				\draw[line width=0.75] (0,3)--(-2.58,1.5);
				\draw[line width=0.75] (1.72,3)--(-2.58,0.5);
				\draw[line width=0.75] (2.58,2.5)--(-2.58,-.5);
				\draw[line width=0.75] (2.58,1.5)--(-1.72,-1);
				\draw[line width=0.75] (2.58,0.5)--(0,-1);
				\draw[line width=0.75] (2.58,-0.5)--(1.72,-1);
				\foreach \l in {0,1,2}
					\fill[cm={1,0,0,1,(0,\l)}] (0,0) circle (1.9pt);
				\foreach \l in {-1,0,1,2}
					\fill[cm={1,0,0,1,(0,\l)}] (-0.86,0.5) circle (1.9pt);
				\foreach \l in {0,1,2}
					\fill[cm={1,0,0,1,(0,\l)}] (-1.72,0) circle (1.9pt);
				\foreach \l in {-1,0,1,2}
					\fill[cm={1,0,0,1,(0,\l)}] (0.86,0.5) circle (1.9pt);
				\foreach \l in {0,1,2}
					\fill[cm={1,0,0,1,(0,\l)}] (1.72,0) circle (1.9pt);
				\foreach \l in {0,1,2}
					\fill[color=red,cm={1,0,0,1,(0.573,\l)}] (0,0) circle (1.9pt);
				\foreach \l in {-1,0,1,2}
					\fill[color=red,cm={1,0,0,1,(0.573,\l)}] (-0.86,0.5) circle (1.9pt);
				\foreach \l in {0,1,2}
					\fill[color=red,cm={1,0,0,1,(0.573,\l)}] (-1.72,0) circle (1.9pt);
				\foreach \l in {-1,0,1,2}
					\fill[color=red,cm={1,0,0,1,(0.573,\l)}] (0.86,0.5) circle (1.9pt);
				\foreach \l in {0,1,2}
					\fill[color=red,cm={1,0,0,1,(0.573,\l)}] (1.72,0) circle (1.9pt);
				\foreach \l in {-1,0,1,2}
					\fill[color=red,cm={1,0,0,1,(0.573,\l)}] (-2.58,0.5) circle (1.9pt);
				\foreach \l in {0,1,2}
					\fill[color=blue,cm={1,0,0,1,(-0.573,\l)}] (0,0) circle (1.9pt);
				\foreach \l in {-1,0,1,2}
					\fill[color=blue,cm={1,0,0,1,(-0.573,\l)}] (-0.86,0.5) circle (1.9pt);
				\foreach \l in {0,1,2}
					\fill[color=blue,cm={1,0,0,1,(-0.573,\l)}] (-1.72,0) circle (1.9pt);
				\foreach \l in {-1,0,1,2}
					\fill[color=blue,cm={1,0,0,1,(-0.573,\l)}] (0.86,0.5) circle (1.9pt);
				\foreach \l in {0,1,2}
					\fill[color=blue,cm={1,0,0,1,(-0.573,\l)}] (1.72,0) circle (1.9pt);
				\foreach \l in {-1,0,1,2}
					\fill[color=blue,cm={1,0,0,1,(-0.573,\l)}] (2.58,0.5) circle (1.9pt);
			\end{tikzpicture}
			\caption{\textbf{$(3^6)$ Tiling}: The black, blue and red points represent $P_0=(2+\omega)$, $P_1=-1+(2+\omega)$ and $P_2=1+(2+\omega)$, respectively}\label{(p0p1p2}
		\end{figure}
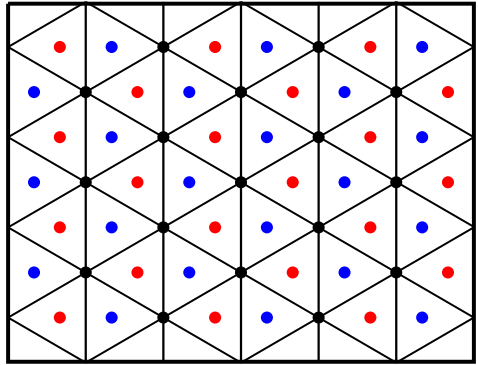
		
		The symmetry group of $\tiling$ is $\sg(\tiling)=\langle h_1,h_2,h_3,h_4 \rangle$, where $h_1$ is a rotation, $h_2$ is a reflection, and $h_3$ and $h_4$ are translations. 
		Table \ref{tab:SymmetryGroupT} shows these symmetries as complex functions for the three regular tilings.				
		\begin{table}[ht]
			\begin{tabular}{|c|c|c|c|}
				\hline
				Tiling & $(4^4)$ & $(6^3)$ & $(3^6)$ \\
				\hline
				$h_1(z)=$ & $iz$ &  $(1+\omega)z$ & $(1+\omega)z$   \\
				\hline
				$h_2(z)=$ & $\bar{z}$ & $\bar{z}$ & $\bar{z}$ \\
				\hline
				$h_3(z)=$ & $z+1$ & $z+1$ & $z+(2+\omega)$ \\
				\hline
				$h_4(z)=$ & $z+i$ & $z+\omega$ & $z+(1-\omega)$ \\
				\hline
				$\sg(\tiling)$ of type & $p4m$ or $\ast 442$ & $p6m$ or $\ast 632$ & $p6m$ or $\ast 632$\\
				\hline
				\end{tabular}
			\caption{Symmetries of Regular Tilings}\label{tab:SymmetryGroupT}
		\end{table}
		
		We need the following result from \cite{Buga} that provides necessary and sufficient conditions for a sublattice coloring of $\Z[i]$ or $\Z[\omega]$ to be chirally or fully
		perfect.  
		
		\begin{thm}\label{thm:PerfectT}
			Let $\xi\in\{i,\omega\}$, and $\coloring_{\Z[\xi]}$ be a coloring induced by the sublattice $\Gamma$. Then the coloring $\coloring_{\Z[\xi]}$ is chirally perfect if and only if 
			$\Gamma$ is an ideal of $\Z[\xi]$. In addition, if $\beta\in\Z[\xi]$, then the coloring $\coloring_{\Z[\xi]}$ induced by $(\beta)$ is perfect if and only if 
			$\beta=\pm\xi^j\bar{\beta}$.
		\end{thm}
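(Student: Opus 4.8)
The plan is to reduce everything to elementary ideal arithmetic in $\Z[\xi]$, using the explicit form of $\sg(\Z[\xi])$. Recall that $\sg(\Z[\xi])$ is the space group of the square lattice (for $\xi=i$) or the hexagonal lattice (for $\xi=\omega$), so — as recorded for the corresponding regular tiling in Table~\ref{tab:SymmetryGroupT} — each of its elements has the form $z\mapsto uz+\lambda$ or $z\mapsto u\bar z+\lambda$ with $\lambda\in\Z[\xi]$ and $u$ a unit of $\Z[\xi]$; the maps of the first type constitute $\rg(\Z[\xi])$, and since $\Z[i]^{\times}=\{\pm i^{j}\}$ and $\Z[\omega]^{\times}=\{\pm\omega^{j}\}$, this rotation group has index $2$ in $\sg(\Z[\xi])$. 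The first step is to observe that an element $h$ of $\sg(\Z[\xi])$, being a permutation of $\Z[\xi]$, is a color symmetry of $\coloring_{\Z[\xi]}$ if and only if it permutes the cosets of $\Gamma$, i.e.\ $x-y\in\Gamma$ implies $h(x)-h(y)\in\Gamma$. Writing out $h(x)-h(y)$ and letting $x-y$ range over all of $\Gamma$ (take $y=0$), this condition reads $u\Gamma\subseteq\Gamma$ when $h(z)=uz+\lambda$ and $u\overline{\Gamma}\subseteq\Gamma$ when $h(z)=u\bar z+\lambda$; in either case $\lambda$ is irrelevant and $u$, being a unit, can be dropped.

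For the first equivalence: if $\Gamma$ is an ideal, then $u\Gamma\subseteq\Gamma$ for every $u\in\Z[\xi]$, so by the previous paragraph every element of $\rg(\Z[\xi])$ is a color symmetry, i.e.\ $\coloring_{\Z[\xi]}$ is chirally perfect. Conversely, if $\coloring_{\Z[\xi]}$ is chirally perfect, then in particular the rotation $z\mapsto\xi z$ (which lies in $\rg(\Z[\xi])$ because $\xi$ is a unit) is a color symmetry, so $\xi\Gamma\subseteq\Gamma$; since $\Gamma$ is closed under addition and under integer multiples and $\Z[\xi]=\Z+\Z\xi$, this upgrades to $\Z[\xi]\,\Gamma\subseteq\Gamma$, so $\Gamma$ is an ideal.

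For the second assertion, take $\Gamma=(\beta)$, which is an ideal and hence already chirally perfect by the first part, so $\rg(\Z[\xi])\subseteq\mathcal{H}_{\coloring_{\Z[\xi]}}$. Because $\rg(\Z[\xi])$ has index $2$ in $\sg(\Z[\xi])$, the coloring is (fully) perfect if and only if $\mathcal{H}_{\coloring_{\Z[\xi]}}$ contains at least one indirect symmetry, which by the first paragraph happens if and only if $\overline{(\beta)}\subseteq(\beta)$, i.e.\ $(\bar\beta)\subseteq(\beta)$ (note $\overline{(\beta)}=(\bar\beta)$ since conjugation is a ring automorphism of $\Z[\xi]$, and the choice of unit $u$ does not matter because $u(\bar\beta)=(\bar\beta)$). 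Here I would invoke the fact recalled in the Preliminaries that $(\gamma)$ has index $|\gamma|^{2}$ in $\Z[\xi]$: since $|\bar\beta|=|\beta|$, the inclusion $(\bar\beta)\subseteq(\beta)$ of equal-index subgroups is an equality, so $\beta$ and $\bar\beta$ are associates, which — given $\Z[\xi]^{\times}=\{\pm\xi^{j}\}$ — is exactly the condition $\beta=\pm\xi^{j}\bar\beta$ for some $j$. Conversely, when this condition fails, $(\bar\beta)\not\subseteq(\beta)$, so no indirect symmetry is a color symmetry, $\mathcal{H}_{\coloring_{\Z[\xi]}}=\rg(\Z[\xi])\neq\sg(\Z[\xi])$, and the coloring is not perfect.

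There is no deep obstacle here; the argument is essentially bookkeeping with cosets and units. The two points needing a little care are: (i) the structural input that $\sg(\Z[\xi])$ has the stated normal form with $[\sg(\Z[\xi]):\rg(\Z[\xi])]=2$, which is just the holohedry of the square/hexagonal lattice; and (ii) the passage from $(\bar\beta)\subseteq(\beta)$ to $(\bar\beta)=(\beta)$, which relies on the index formula $[\Z[\xi]:(\gamma)]=|\gamma|^{2}$ together with $|\bar\beta|=|\beta|$.
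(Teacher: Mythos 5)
Your proposal is correct, but there is no in-paper proof to measure it against: the paper imports Theorem \ref{thm:PerfectT} verbatim from \cite{Buga} and never proves it. Your argument is the standard (and surely intended) one: write every element of $\sg(\Z[\xi])$ as $z\mapsto uz+\lambda$ or $z\mapsto u\bar z+\lambda$ with $u$ a unit and $\lambda\in\Z[\xi]$, note that such a map permutes the cosets of $\Gamma$ exactly when $u\Gamma\subseteq\Gamma$ (respectively $u\overline{\Gamma}\subseteq\Gamma$), get the ideal criterion from $\xi\Gamma\subseteq\Gamma$ together with $\Z[\xi]=\Z+\Z\xi$, and settle full perfection from the index-$2$ position of $\rg(\Z[\xi])$ in $\sg(\Z[\xi])$ plus the index formula $[\Z[\xi]:(\gamma)]=|\gamma|^2$, which upgrades $(\bar\beta)\subseteq(\beta)$ to $(\bar\beta)=(\beta)$ and hence to $\beta=\pm\xi^j\bar\beta$. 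All of these steps check out, including the counting argument hiding behind ``$h$ is a color symmetry iff it maps cosets into cosets.''

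One parenthetical in your first paragraph overreaches: the claim that ``$u$, being a unit, can be dropped'' is false for a general sublattice. Dropping $u$ from $u\Gamma\subseteq\Gamma$ leaves the tautology $\Gamma\subseteq\Gamma$, which would make every rotation a color symmetry of every sublattice coloring; e.g.\ $\Gamma=2\Z+\Z i$ satisfies $\Gamma\subseteq\Gamma$ but $i\Gamma\not\subseteq\Gamma$, and indeed the order-$4$ rotation does not permute its two cosets. Fortunately you never use the claim in that generality: in the chirality part you work with the specific unit $\xi$, and in the perfection part you re-justify it via $u(\bar\beta)=(\bar\beta)$, which is legitimate precisely because $(\beta)$ is an ideal. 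Restrict that remark to the ideal case (or delete it) and the proof is complete.
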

		
		Theorem \ref{thm:PerfectT} states that a sublattice coloring of $\Z[i]$ or $\Z[\omega]$ is chirally perfect if and only if the sublattice that induces the coloring is an ideal of 
		the respective lattice. We refer to such colorings as \emph{ideal colorings} of the lattice. Furthermore, this ideal coloring is perfect if its generator is \emph{balanced}, 
		that is, the generator and its conjugate are associates \cite{Wash}.

	\section{Family of Tilings with a Singularity via Conformal Mappings}\label{sec:ConformalMapping}
		We consider the conformal map $\cmap:\C\rightarrow\C^{\ast}:=\C\setminus\{0\}$ defined by 
		\begin{equation}\label{eq:ConformalMap}
			\cmap(z)=\exp\left(\frac{2\pi i z}{\alpha}\right)=\exp\left(\frac{2\pi i}{{|\alpha|}^2}\bar{\alpha}z\right)
		\end{equation}
		where $\alpha\in\C^{\ast}$.  The following lemma establishes which points of $\C$ have the same image under $\cmap$.
		
		\begin{lem}\label{lem:SameImage}
			Let $\cmap(z)=\exp(2\pi i z/\alpha)$, where $\alpha\in\C^{\ast}$. 
			Given $z_1,z_2\in\C$, then $\cmap(z_1)=\cmap(z_2)$ if and only if $z_2=z_1+k\alpha$ for some $k\in\Z$.
		\end{lem}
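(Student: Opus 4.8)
The plan is to reduce the statement to the well-known description of the ``kernel'' of the complex exponential, namely that $\exp(w)=1$ holds precisely when $w\in 2\pi i\,\Z$. Everything else is formal manipulation, since $\cmap$ takes values in $\C^{\ast}$ and is therefore never zero, so one may freely divide.

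First I would dispatch the easy implication. If $z_2=z_1+k\alpha$ for some $k\in\Z$, then
\[
\cmap(z_2)=\exp\!\left(\frac{2\pi i (z_1+k\alpha)}{\alpha}\right)=\exp\!\left(\frac{2\pi i z_1}{\alpha}\right)\exp(2\pi i k)=\cmap(z_1),
\]
using the additivity of $\exp$ and $\exp(2\pi i k)=1$ for integer $k$. For the converse, suppose $\cmap(z_1)=\cmap(z_2)$. Dividing (legitimate since $\cmap(z_1)\neq 0$) and using additivity gives $\exp\!\big(2\pi i (z_2-z_1)/\alpha\big)=1$.

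Next I would invoke the characterization of when the exponential equals $1$: writing $w=u+iv$ with $u,v\in\R$, one has $\exp(w)=e^{u}(\cos v+i\sin v)$, and $|\exp(w)|=e^{u}=1$ forces $u=0$, after which $\cos v+i\sin v=1$ forces $v\in 2\pi\Z$; hence $\exp(w)=1$ if and only if $w=2\pi i k$ for some $k\in\Z$. Applying this with $w=2\pi i (z_2-z_1)/\alpha$ yields $2\pi i (z_2-z_1)/\alpha=2\pi i k$, and multiplying through by $\alpha/(2\pi i)$ gives $z_2-z_1=k\alpha$, i.e.\ $z_2=z_1+k\alpha$, as desired.

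There is essentially no serious obstacle here; the only point requiring any care is the justification of the statement $\exp(w)=1\iff w\in 2\pi i\,\Z$, which I would either prove in one line via the polar decomposition of $\exp$ as above or simply cite as a standard fact from complex analysis. The second equality in \eqref{eq:ConformalMap} is not needed for this argument, though it confirms that dividing by $\alpha$ and multiplying by $\bar\alpha/|\alpha|^{2}$ are the same operation.
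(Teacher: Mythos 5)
Your argument is correct: reducing the equality $\cmap(z_1)=\cmap(z_2)$ to $\exp\bigl(2\pi i(z_2-z_1)/\alpha\bigr)=1$ and invoking the fact that $\exp(w)=1$ if and only if $w\in 2\pi i\,\Z$ is exactly the standard reasoning the paper leaves implicit, since it states Lemma~\ref{lem:SameImage} without proof. Nothing is missing, and your one-line justification of the kernel of $\exp$ via the polar decomposition is perfectly adequate.
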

		
		We now apply $\cmap$ on a regular tiling $\tiling$ to obtain a tiling with a singularity at the origin. For this, we need to find conditions on $\alpha$ that guarantee that 
		$\stiling:=\{\cmap(T)\mid T\in\tiling\}$ is a tiling of $\C^{\ast}$.
		
		\begin{thm}\label{alphaconditions}
			Let $\tiling$ be a regular tiling, $\Lambda$ be the orbit of the origin under the action of $\tg(\tiling)$ and $\cmap(z)=\exp(2\pi i z/\alpha)$. 
			If $\alpha\in\Lambda$ and $|\alpha|>\max\{|z_1-z_2|\mid z_1,z_2\in \partial T,\ T\in\tiling\}$ (where $\partial T$ denotes the boundary of tile $T$), then 
			$\stiling=\{\cmap(T)\mid T\in\tiling\}$ is a tiling of $\C^{\ast}$.
		\end{thm}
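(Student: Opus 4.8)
The plan is to verify the three defining properties of a tiling of $\C^{\ast}$: that $\stiling$ is a countable family of tiles, that it covers $\C^{\ast}$, and that its members have pairwise disjoint interiors. The covering property is immediate, since $z\mapsto 2\pi i z/\alpha$ is a bijection of $\C$ and $\exp$ maps $\C$ onto $\C^{\ast}$, so $\cmap\colon\C\to\C^{\ast}$ is surjective and $\bigcup_{T\in\tiling}\cmap(T)=\cmap(\bigcup_{T\in\tiling}T)=\cmap(\C)=\C^{\ast}$; countability is clear because $\tiling$ is countable. For the tile-hood of each $\cmap(T)$ the crucial point is that $\cmap$ is \emph{injective on every $T\in\tiling$}: by Lemma~\ref{lem:SameImage}, if $z_{1},z_{2}\in T$ with $\cmap(z_{1})=\cmap(z_{2})$ then $z_{2}=z_{1}+k\alpha$ for some $k\in\Z$, and $k\neq 0$ would force $|z_{1}-z_{2}|=|k|\,|\alpha|\geq|\alpha|$, which exceeds the diameter of $T$ (the tiles of a regular tiling are convex, so their diameter is attained between two boundary points and is thus at most the bound in the hypothesis) — impossible for $z_{1},z_{2}\in T$, so $k=0$ and $z_{1}=z_{2}$. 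Since $\cmap$ is holomorphic with nonvanishing derivative it is an open map and a local homeomorphism, so $\cmap|_{T}$ is a homeomorphism onto the compact set $\cmap(T)$; using invariance of domain together with the convexity of $T$ one obtains $\interior\cmap(T)=\cmap(\interior T)$ and that $\partial T$ is carried onto the frontier of $\cmap(T)$, whence $\cmap(T)=\overline{\interior\cmap(T)}$ is indeed a tile. A small-neighbourhood version of the same diameter estimate is what makes the interior/frontier claim go through.

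For the packing property I would argue by contradiction. Suppose $T_{i},T_{j}\in\tiling$ with $\cmap(T_{i})\neq\cmap(T_{j})$ but some $w\in\interior\cmap(T_{i})\cap\interior\cmap(T_{j})$. By the previous step I can write $w=\cmap(z_{i})=\cmap(z_{j})$ with $z_{i}\in\interior T_{i}$ and $z_{j}\in\interior T_{j}$; Lemma~\ref{lem:SameImage} then gives $z_{j}=z_{i}+k\alpha$ for some $k\in\Z$. If $k=0$, then $z_{i}=z_{j}\in\interior T_{i}\cap\interior T_{j}$, so $T_{i}=T_{j}$ because $\tiling$ is a packing, contradicting $\cmap(T_{i})\neq\cmap(T_{j})$. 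If $k\neq 0$, then $k\alpha\in\Lambda$, so translation by $k\alpha$ is a symmetry of $\tiling$; hence $T_{i}+k\alpha\in\tiling$, and it contains the interior point $z_{i}+k\alpha=z_{j}$ of $T_{j}$, which (again by the packing property of $\tiling$) forces $T_{j}=T_{i}+k\alpha$, and then $\cmap(T_{j})=\cmap(T_{i})$ by Lemma~\ref{lem:SameImage}, a contradiction. Therefore distinct members of $\stiling$ have disjoint interiors, and $\stiling$ is a tiling of $\C^{\ast}$.

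I expect the packing step to be the main obstacle, and it is precisely there that both hypotheses are used: the condition $|\alpha|>\max\{|z_{1}-z_{2}|\mid z_{1},z_{2}\in\partial T,\ T\in\tiling\}$ prevents $\cmap$ from folding a tile onto itself or onto a neighbour — so the only way two tiles can acquire the same image under $\cmap$ is by an exact translation by a multiple of $\alpha$ — while $\alpha\in\Lambda$ guarantees that every such translation is already a symmetry of $\tiling$, so no genuinely new overlaps are created. The secondary technical point requiring care is the assertion that $\cmap$ sends the interior and boundary of a tile to the interior and frontier of its image; I would justify this from openness of holomorphic maps and invariance of domain rather than by explicit computation with $\exp$.
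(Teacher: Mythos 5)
Your proof is correct, and its covering and packing steps are essentially the paper's: the packing argument via Lemma \ref{lem:SameImage} together with the observation that $\alpha\in\Lambda$ makes translation by $k\alpha$ a symmetry of $\tiling$ is exactly what the paper does (stated there directly rather than by contradiction). Where you genuinely diverge is in showing that each $\cmap(T)$ is a tile. The paper checks injectivity of $\cmap$ only on $\partial T$ (same diameter estimate), concludes that $\cmap(\partial T)$ is a simple closed curve, and then cites a boundary-correspondence theorem from Bieberbach's \emph{Conformal mapping} to get that $\interior(T)$ is mapped one-to-one onto the interior of that curve. You instead prove injectivity of $\cmap$ on the whole closed tile (legitimate: the diameter of a tile is attained at boundary points, so the hypothesis gives $\operatorname{diam}(T)<|\alpha|$) and then argue topologically, via openness of nonconstant holomorphic maps and invariance of domain, that $\cmap(\interior T)=\interior\cmap(T)$ and $\cmap(T)=\overline{\interior\cmap(T)}$. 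The one place you only gesture is the reverse inclusion (boundary points land on the frontier of the image), and it does need the ``small-neighbourhood version'' of the estimate you allude to: if $z\in\partial T$ and $z'\notin T$ is close to $z$, then $\cmap(z')\notin\cmap(T)$, since $\cmap(z')=\cmap(t)$ with $t\in T$ would force $|z'-t|\geq|\alpha|$ by Lemma \ref{lem:SameImage}, while $|z'-t|\leq|z'-z|+\operatorname{diam}(T)<|\alpha|$ for $z'$ sufficiently near $z$ (the strict inequality in the hypothesis is what makes this work). Write that short argument out, since it is what replaces the paper's citation; with it in place your proof is complete. The trade-off between the two routes: the paper's is shorter but rests on a classical conformal-mapping theorem, while yours is self-contained up to standard topology and makes explicit that the single condition $\operatorname{diam}(T)<|\alpha|$ does all the work, both inside a tile and across its boundary.
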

		
		\begin{proof}
			Let $T\in\tiling$. 
			Clearly, $\cmap(\partial T)$ is a closed curve.
			Suppose $z_1,z_2\in\partial T$ such that $\cmap(z_1)=\cmap(z_2)$. Then $z_1=z_2+k\alpha$ for some $k\in\Z$ by Lemma \ref{lem:SameImage}. 
			We have $|\alpha|>|z_1-z_2|=|k||\alpha|$ which implies that $k=0$ and $z_1=z_2$.
			Thus, $\cmap$ maps $\partial T$ one-to-one and onto $\cmap(\partial T)$.
			This means that $\cmap(\partial T)$ is simple, and it follows from~\cite[Chapter 3]{Bieb} that $\interior(T)$ is also mapped one-to-one 
			and onto $\interior(\cmap(\partial T))$.  This implies that $\cmap(T)$ is a tile.
			
			Since $\tiling$ is a covering of $\C$ and $\cmap$ is onto, $\mathscr{S}_{\alpha}$ is a covering of $\C^{\ast}$.
			All that remains is to show that $\stiling$ is a packing of~$\C^{\ast}$.
			Indeed, let $T_1,T_2\in\tiling$ such that $\interior(\cmap(T_1))\cap\interior(\cmap(T_2))\neq\varnothing$. 
			Then there exist $z_1\in\interior(T_1)$ and $z_2\in\interior(T_2)$ such that  
			$z_2=z_1+k\alpha$ for some $k\in\Z$ by Lemma \ref{lem:SameImage}. 
			Since $\alpha\in\Lambda$, we have $z_2\in\interior(T_1+k\alpha)$. 
			Hence, $T_2=T_1+k\alpha$ because $\tiling$ is a packing of $\C$, and
			thus, $\cmap(T_1)=\cmap(T_2)$.
		\end{proof}
		
		Henceforth, it shall be assumed that $\alpha$ satisfies the required conditions stated in Theorem \ref{alphaconditions} so that $\stiling$ is a tiling of $\C^{\ast}$. 
		We say that such an $\alpha$ is \emph{admissible}.
		We make the following observations about admissible values of $\alpha$.
		\begin{enumerate}
			\item Since $\alpha\in\Lambda$, there exist unique integers $L$ and $R$ such that $\alpha =L+Ri$, $\alpha=L+R\omega$ and $\alpha=(L+R\omega)(2+\omega)=(2L-R)+(L+R)\omega$ 
			for the $(4^4)$, $(6^3)$ and $(3^6)$ tilings, respectively. 
			We now say that $L$ and $R$ are the \emph{integers that determine} $\alpha$.
			
			\item Since $|\alpha|>\max\{|z_1-z_2|\mid z_1,z_2\in \partial T,\ T\in\tiling\}$, $|\alpha|$ is greater than $\sqrt{2}$, $\frac{2\sqrt{3}}{3}$ and $\sqrt{3}$ for the 
			$(4^4)$, $(6^3)$ and $(3^6)$ tilings, respectively. 
		\end{enumerate}
	
		The following proposition states that if $\alpha$ is an admissible value, then replacing $\alpha$ by any of its associates in \eqref{eq:ConformalMap} yields the same tiling 
		$\stiling$.  Thus, to exhaust all possible $\stiling$, it is enough to consider one representative from each associate class of $\Z[\xi]$. 

		\begin{prop}\label{prop:SameImage}
			Let $\varphi_{\alpha_1}(z)=\exp(2\pi i z/\alpha_1)$,
			$\varphi_{\alpha_2}(z)=\exp(2\pi i z/\alpha_2)$, and $\xi\in\{i,\omega\}$.
			If $\alpha_1$ and $\alpha_2$ are associates in $\Z[\xi]$, then $\mathscr{S}_{\alpha_1}=\mathscr{S}_{\alpha_2}$.
		\end{prop}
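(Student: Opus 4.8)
The plan is to exhibit $\varphi_{\alpha_2}$ as $\varphi_{\alpha_1}$ precomposed with a rotational symmetry of $\tiling$; such a precomposition merely permutes the tiles of $\tiling$ among themselves, and hence leaves the image collection unchanged. First I would use the hypothesis that $\alpha_1$ and $\alpha_2$ are associates to write $\alpha_1 = u\alpha_2$ for some unit $u$ of $\Z[\xi]$. The essential point is that the unit group of $\Z[\xi]$ is precisely the cyclic group $\langle h_1\rangle$ when $h_1$ is read as a complex multiplication: for $\xi = i$ the units are $\{\pm 1,\pm i\} = \{i^{k}: k\in\Z\}$, which are the multipliers of the powers of $h_1(z)=iz$; and for $\xi=\omega$ the units are $\{\pm 1,\pm\omega,\pm\omega^{2}\}$, the six sixth roots of unity, so since $1+\omega = e^{i\pi/3}$ is a primitive one, they are exactly the multipliers of the powers of $h_1(z)=(1+\omega)z$. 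In either case $u = \eta^{k}$ for some $k\in\Z$, where $\eta$ is the multiplier of $h_1$; consequently multiplication by $u^{-1}=\eta^{-k}$ is the isometry $g := h_1^{-k}\in\sg(\tiling)$. (Since $|u|=1$ and $g$ preserves $\Lambda$ --- and, in the $(3^6)$ case, the ideal $(2+\omega)$ --- the value $\alpha_2 = u^{-1}\alpha_1$ is admissible whenever $\alpha_1$ is, so that $\mathscr{S}_{\alpha_1}$ and $\mathscr{S}_{\alpha_2}$ are both tilings of $\C^{\ast}$ by Theorem \ref{alphaconditions}.)

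With $g$ in hand, the computation is immediate: for every $z\in\C$,
\[
\varphi_{\alpha_1}(z) = \exp\!\left(\frac{2\pi i z}{u\alpha_2}\right) = \exp\!\left(\frac{2\pi i\,(u^{-1}z)}{\alpha_2}\right) = \varphi_{\alpha_2}(u^{-1}z) = \varphi_{\alpha_2}\bigl(g(z)\bigr),
\]
so $\varphi_{\alpha_1} = \varphi_{\alpha_2}\circ g$ on all of $\C$. Since $g$ is a symmetry of $\tiling$, the assignment $T\mapsto g(T)$ is a bijection of $\tiling$ onto itself, and therefore
\[
\mathscr{S}_{\alpha_1} = \{\varphi_{\alpha_1}(T)\mid T\in\tiling\} = \{\varphi_{\alpha_2}(g(T))\mid T\in\tiling\} = \{\varphi_{\alpha_2}(T')\mid T'\in\tiling\} = \mathscr{S}_{\alpha_2},
\]
as claimed.

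I expect the only step with genuine content to be the identification of the unit group of $\Z[\xi]$ with the point rotation group $\langle h_1\rangle$ of $\tiling$ about the origin --- equivalently, recognising that the units are exactly the fourth (resp.\ sixth) roots of unity and that $h_1$ generates them. Everything else is the one-line functional identity $\varphi_{\alpha_1}=\varphi_{\alpha_2}\circ g$ together with the elementary fact that a symmetry of a tiling only relabels its tiles, so I anticipate no real obstacle beyond this bookkeeping.
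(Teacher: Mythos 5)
Your proof is correct and follows essentially the same route as the paper: both write the associate relation via a unit $\epsilon$, observe that $z\mapsto\epsilon z$ is a symmetry of $\tiling$, and use the identity $\varphi_{\alpha_1}(T)=\varphi_{\alpha_2}(\epsilon T)$ to conclude the two image tilings coincide. Your extra remarks (identifying the unit group with $\langle h_1\rangle$ and checking admissibility is preserved) are fine but just make explicit what the paper leaves implicit.
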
		
		\begin{proof}
			Let $\xi =i$ if $\tiling$ is the $(4^4)$ tiling, and $\xi=\omega$ if $\tiling$ is the $(6^3)$ or $(3^6)$ tiling.
			If $\alpha_1$ and $\alpha_2$ are associates, then $\alpha_2=\epsilon\alpha_1$ for some unit $\epsilon\in\Z[\xi]$. Now $f(z)=\epsilon z$ is a symmetry of $\tiling$, so  
			$f(T)=\epsilon T\in\tiling$ whenever $T\in\tiling$. The conclusion then follows since $\varphi_{\alpha_1}(T)=\varphi_{\alpha_2}(\epsilon T)$. 
		\end{proof}
		
		We now describe the tilings $\stiling$.  
		Note that the map $\cmap$ sends a line in $\C$ to one of the following: a logarithmic spiral with the asymptotic point at the origin, a circle centered at the origin, and an 
		open-ended ray emanating from the origin.  This is the reason why $\stiling$ ends up having a singular point at the origin.
		
		The lines (or edges, in the case of the $(3^6)$ tiling) of $\tiling$ can be partitioned into equivalence classes, where two lines belong to the same equivalence class if
		they are parallel. 
		There are two equivalence classes of parallel lines for the $(4^4)$ tiling, and three for the $(6^3)$ and $(3^6)$ tilings.
		It is easy to verify that the images of lines that belong to the same equivalence class are of the same type.
		
		The image under $\cmap$ of an equivalence class of parallel lines in $\tiling$ depends on the prime decomposition of $\alpha$ in $\Z[\xi]$, where $\xi=i$ if $\tiling$ is
		the $(4^4)$ tiling and $\xi=\omega$ if $\tiling$ is the $(6^3)$ or $(3^6)$ tiling.  Hence,
		we categorize all possible $\stiling$ into three classes.  Class 1 tilings are those $\stiling$ for which $\alpha$ is not balanced.  The tiling $\stiling$ falls under Class 2 
		whenever $\alpha$ is balanced and the factorization of $\alpha$ contains an odd power of a factor of the ramified prime in $\Z[\xi]$.
		In this case, we write $\alpha=\epsilon(L-L\xi)$ for some unit $\epsilon$ in $\Z[\xi]$ and $L\in\Z$.  Finally, if $\alpha$ is an integer multiple of a unit in $\Z[\xi]$,
		that is, $\alpha=\epsilon L$, then we say that $\stiling$ belongs to Class 3.  We summarize the curves that bound the tiles of $\stiling$ for each class in Table \ref{stiling}.		
		
		\begin{table}[ht]
			\begin{tabular}{|c||c|c|}\hline
				Class & $\tiling$ & Images of lines (or edges) of $\tiling$ under $\cmap$\\\hline 
				\multirow{3}{*}{1} & $(4^{4})$ & 
				\begin{minipage}[c]{0.82\linewidth}
					two sets of spirals that contain $|L|$ and $|R|$ spirals, spirals in one set are positively-oriented while spirals in the other set are negatively-oriented
				\end{minipage}\\\cline{2-3}
				& $(6^{3})$ & 
				\begin{minipage}[c]{0.82\linewidth}
					three sets of spirals that contain $|2L-R|$, $|2R-L|$ and $|L+R|$ spirals, spirals in two sets go in the same direction while spirals in the third set go in the
					opposite direction
				\end{minipage}\\\cline{2-3} 
				& $(3^{6})$ & 
				\begin{minipage}[c]{0.82\linewidth}
					three sets of spirals that contain $|L|$, $|R|$ and $|L-R|$ spirals, spirals in two sets go in the same direction while spirals in the third set go in the
					opposite direction
				\end{minipage}\\\hline\hline
				\multirow{3}{*}{2} & $(4^{4})$ & 
				\begin{minipage}[c]{0.82\linewidth}
					$|L|$ positively- and $|L|$ negatively-oriented spirals
				\end{minipage}\\\cline{2-3}
				& $(6^{3})$ & 
				\begin{minipage}[c]{0.82\linewidth}
					$|3L|$ positively- and $|3L|$ negatively-oriented spirals, circles centered at the origin 
				\end{minipage}\\\cline{2-3}
				& $(3^{6})$ & 
				\begin{minipage}[c]{0.82\linewidth}
					$|L|$ positively- and $|L|$ negatively-oriented spirals, circles centered at the origin 
				\end{minipage}\\\hline\hline
				\multirow{3}{*}{3} & $(4^{4})$ & 
				\begin{minipage}[c]{0.82\linewidth}
					$|L|$ rays emanating from the origin, circles centered at the origin
				\end{minipage}\\\cline{2-3}						
				& $(6^{3})$ & 
				\begin{minipage}[c]{0.82\linewidth}
					$|L|$ positively- and $|L|$ negatively-oriented spirals, $|2L|$ rays emanating from the origin 
				\end{minipage}\\\cline{2-3}						
				& $(3^{6})$ & 
				\begin{minipage}[c]{0.82\linewidth}
					$|L/3|$ positively- and $|L/3|$ negatively-oriented spirals, $|2L/3|$ rays emanating from the origin 
				\end{minipage}\\\hline		
			\end{tabular}
			\caption{Three Classes of Tilings with a Singularity at the Origin and the Curves that Bound Their Tiles}
			\label{stiling}
		\end{table}
		
		Let $[\ell]$ be an equivalence class of lines (or edges) of $\tiling$.  Observe from Table \ref{stiling} that if $\cmap([\ell])$ is a set of logarithmic spirals or a set of rays, 
		then it contains only a finite number of elements.  In addition, as we will see in the next section, the spirals and rays are spread out symmetrically about the origin.  
		On the other other hand, if $\cmap([\ell])$ is a set of circles, then it is countably infinite.  Here, the radii of the circles increase geometrically.  
		Figure \ref{fig:(4^4)Classes} shows an example from each class for the $(4^4)$ tiling.
				
		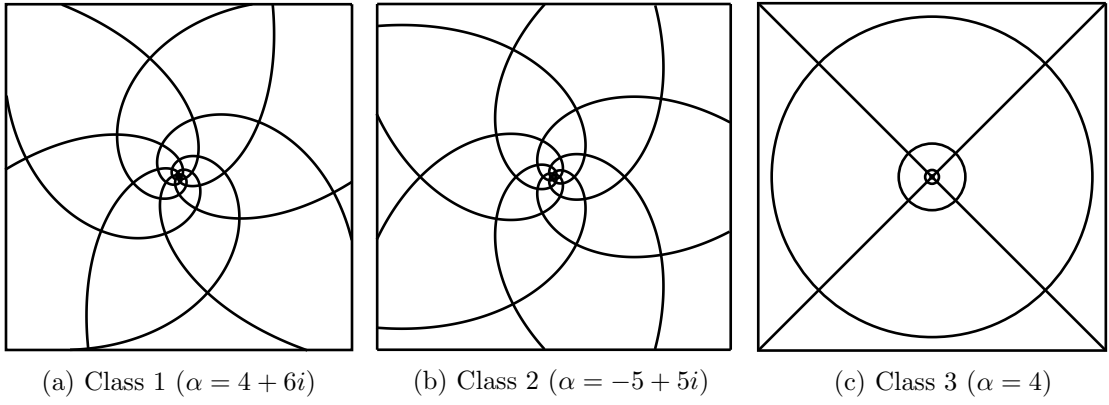
\begin{figure}[ht]
			\begin{subfigure}[c]{0.3\linewidth}
				\centering
				\begin{tikzpicture}[scale=1.15]
					\draw[line width=1pt, color=black, domain=-2.5:1.08,samples=200,smooth] plot (xy polar cs:angle=\x r,radius={exp(1.5*(\x)-0.79)});
					\draw[line width=1pt, color=black, domain=-2.5:2.17,samples=200,smooth] plot (xy polar cs:angle=\x r,radius={exp(1.5*(\x)-2.37)});
					\draw[line width=1pt, color=black, domain=-2.5:3.10,samples=200,smooth] plot (xy polar cs:angle=\x r,radius={exp(1.5*(\x)-3.95)});
					\draw[line width=1pt, color=black, domain=-2.5:4.23,samples=200,smooth] plot (xy polar cs:angle=\x r,radius={exp(1.5*(\x)-5.53)});
					\draw[line width=1pt, color=black, domain=-2.5:5.35,samples=200,smooth] plot (xy polar cs:angle=\x r,radius={exp(1.5*(\x)-7.11)});
					\draw[line width=1pt, color=black, domain=-1:6.26,samples=200,smooth] plot (xy polar cs:angle=\x r,radius={exp(1.5*(\x)-8.69)});
					\draw[line width=1pt, color=black, domain=-0.35:10,samples=200,smooth] plot (xy polar cs:angle=\x r,radius={exp(-0.67*(\x)+0.52)});
					\draw[line width=1pt, color=black, domain=1.17:10,samples=200,smooth] plot (xy polar cs:angle=\x r,radius={exp(-0.67*(\x)+1.56)});
					\draw[line width=1pt, color=black, domain=2.7:10,samples=200,smooth] plot (xy polar cs:angle=\x r,radius={exp(-0.67*(\x)+2.6)});
					\draw[line width=1pt, color=black, domain=4.15:10,samples=200,smooth] plot (xy polar cs:angle=\x r,radius={exp(-0.67*(\x)+3.64)});
					\draw[line width=1] (2,2)--(-2,2)--(-2,-2)--(2,-2)--(2,2);
				\end{tikzpicture}
				\caption{Class 1 ($\alpha=4+6i$)}
			\end{subfigure}
			\begin{subfigure}[c]{0.3\linewidth}
				\begin{tikzpicture}[scale=.5]
					\draw[line width=1pt, color=black, domain=-4:1.04,samples=200,smooth] plot (xy polar cs:angle=\x r,radius={exp((\x)+0.63)});
					\draw[line width=1pt, color=black, domain=-5:-0.3,samples=200,smooth] plot (xy polar cs:angle=\x r,radius={exp((\x)+3*0.63)});
					\draw[line width=1pt, color=black, domain=-7.14:-1.625,samples=200,smooth] plot (xy polar cs:angle=\x r,radius={exp((\x)+5*0.63)});
					\draw[line width=1pt, color=black, domain=-8.14:-2.8,samples=200,smooth] plot (xy polar cs:angle=\x r,radius={exp((\x)+7*0.63)});
					\draw[line width=1pt, color=black, domain=-10.28:-3.845,samples=200,smooth] plot (xy polar cs:angle=\x r,radius={exp((\x)+9*0.63)});
					\draw[line width=1pt, color=black, domain=-1.04:4,samples=200,smooth] plot (xy polar cs:angle=\x r,radius={exp(0.63-(\x))});
					\draw[line width=1pt, color=black, domain=0.3:5,samples=200,smooth] plot (xy polar cs:angle=\x r,radius={exp((3*0.63)-(\x))});
					\draw[line width=1pt, color=black, domain=1.625:7.14,samples=200,smooth] plot (xy polar cs:angle=\x r,radius={exp((5*0.63)-\x)});
					\draw[line width=1pt, color=black, domain=2.8:8.14,samples=200,smooth] plot (xy polar cs:angle=\x r,radius={exp((7*0.63)-(\x))});
					\draw[line width=1pt, color=black, domain=3.845:10.28,samples=200,smooth] plot (xy polar cs:angle=\x r,radius={exp((9*0.63)-(\x))});
					\draw[line width=1pt, color=black] (-4.7,4.6)--(4.7,4.6);
					\draw[line width=1pt, color=black] (-4.7,-4.6)--(-4.7,4.6);
					\draw[line width=1pt, color=black] (-4.7,-4.6)--(4.7,-4.6);
					\draw[line width=1pt, color=black] (4.7,-4.6)--(4.7,4.6);
				\end{tikzpicture}
				\caption{Class 2 ($\alpha=-5+5i$)}
			\end{subfigure}
			\begin{subfigure}[c]{0.3\linewidth}
				\begin{tikzpicture}[scale=.042]
					\draw[line width=1pt] (0,0) circle (0.46cm);
					\draw[line width=1pt] (0,0) circle (2.19cm);
					\draw[line width=1pt] (0,0) circle (10.55cm);
					\draw[line width=1pt] (0,0) circle (50.75cm);
					\draw[line width=1pt] (55,55)--(-55,55)--(-55,-55)--(55,-55)--(55,55);
					\draw[line width=1pt] (55,55)--(-55,-55);
					\draw[line width=1pt] (-55,55)--(55,-55);
				\end{tikzpicture}
				\caption{Class 3 ($\alpha=4$)}
			\end{subfigure}
			\caption{Tilings with singularity obtained by applying the conformal map $\cmap$ on the $(4^4)$ tiling}
			\label{fig:(4^4)Classes}
		\end{figure}			

	\section{Symmetries of $\stiling$}\label{sec:SymmetryGroup}
		Since $\stiling$ has only one singular point, a symmetry of $\stiling$ must fix that point.
		Thus, $\sg(\stiling)$ is a group of isometries that fix the origin, and consequently, it must be isomorphic to some finite cyclic group $C_n$ or finite dihedral group
		$D_n$.  This means that to identify $\sg(\stiling)$, we only need to find a generator of $\rg(\stiling)$ and to determine whether any reflection symmetry leaves $\stiling$ 
		invariant.  For this, we turn to the symmetries of $\tiling$.
		
		\begin{lem}\label{lem:CorrespondingSymm}
			Let $\tiling$ be a regular tiling, $\cmap(z)=\exp(2\pi i z/\alpha)$ with $\alpha$ admissible, and $\stiling=\{\cmap(T)\mid T\in\tiling\}$.
			If $g$ is an isometry of $\C^{\ast}$ such that there exists a symmetry $f\in\sg(\tiling)$ for which $g\cmap=\cmap f$, then $g\in\sg(\stiling)$.
		\end{lem}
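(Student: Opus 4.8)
The plan is to check directly that $g$ meets the definition of a symmetry of $\stiling$: it is an isometry of $\C^{\ast}$ that maps the set of tiles of $\stiling$ onto itself. Since $g$ is assumed to be an isometry of $\C^{\ast}$, the only thing left to verify is that $g(\stiling)=\stiling$, i.e. $\{g(S)\mid S\in\stiling\}=\stiling$ as collections of tiles.

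First I would unwind the intertwining relation $g\cmap=\cmap f$ at the level of tiles. Fix $S\in\stiling$; by definition $S=\cmap(T)$ for some $T\in\tiling$. Applying $g\cmap=\cmap f$ pointwise over $T$ gives $g(S)=g(\cmap(T))=\{g(\cmap(z))\mid z\in T\}=\{\cmap(f(z))\mid z\in T\}=\cmap(f(T))$. Since $f\in\sg(\tiling)$ we have $f(T)\in\tiling$, hence $g(S)=\cmap(f(T))\in\stiling$. This shows $g(\stiling)\subseteq\stiling$. For the reverse inclusion I would use that $f$, being an element of the group $\sg(\tiling)$, is a bijection of $\tiling$ onto itself: given any $S'=\cmap(T')\in\stiling$, choose $T\in\tiling$ with $f(T)=T'$, so that the computation above yields $g(\cmap(T))=\cmap(f(T))=\cmap(T')=S'$, whence $S'\in g(\stiling)$. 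Combining the two inclusions gives $g(\stiling)=\stiling$, and therefore $g\in\sg(\stiling)$.

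The argument is essentially a diagram chase, so no single step is a genuine obstacle; the only point that needs a little care is the surjectivity half, where one must invoke the invertibility of $f$ inside $\sg(\tiling)$ rather than any invertibility of $\cmap$ itself. Indeed, recall from the proof of Theorem~\ref{alphaconditions} that $\cmap$ identifies tiles of $\tiling$ that differ by the translation $z\mapsto z+\alpha$, so $T\mapsto\cmap(T)$ is generally not injective and the equation $g\cmap=\cmap f$ alone does not immediately give that every tile of $\stiling$ is hit; it is the bijectivity of $f$ on $\tiling$ that repairs this. I would also note, as a remark to be used later in Section~\ref{sec:SymmetryGroup}, that the same computation shows the correspondence $f\mapsto g$ respects composition, which is precisely what is needed to transport generators of $\sg(\tiling)$ over to $\sg(\stiling)$.
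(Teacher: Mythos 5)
Your proposal is correct and follows essentially the same route as the paper, which simply computes $g(\stiling)=g(\cmap(\tiling))=\cmap(f(\tiling))=\cmap(\tiling)=\stiling$; your two-inclusion argument just spells out the same diagram chase, with the bijectivity of $f$ on $\tiling$ packaged in the paper as the single equality $f(\tiling)=\tiling$.
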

		\begin{proof}
			We have $g(\stiling)=g(\cmap(\tiling))=\cmap(f(\tiling))=\cmap(\tiling)=\stiling$.
		\end{proof}
		
		If $g\in\sg(\stiling)$ and $f\in\sg(\tiling)$ for which the following diagram commutes, 
		\begin{center}
			\begin{tikzpicture}
				\matrix (m) [matrix of math nodes, row sep=3.5em, column sep=5em, text height=2ex, text depth=0.25ex]
				{\C & \C^{\ast}  \\
					\C & \C^{\ast} \\};
				\path[-stealth]
					(m-1-1) edge node [left] {$f$} (m-2-1)
					(m-2-1) edge node [above] {$\cmap$} (m-2-2) 
					(m-1-1) edge node [above] {$\cmap$} (m-1-2)
				(m-1-2) edge node [right] {$g$} (m-2-2);
			\end{tikzpicture}
		\end{center}		
		then we say that $f$ \emph{corresponds} to $g$.  Note that  an $f\in\sg(\tiling)$ that corresponds to $g\in\sg(\stiling)$ is not unique. Indeed, if 
		$h=f+k\alpha\in\sg(\tiling)$ for some $k\in\Z$, then $h$ also corresponds to $g$.  We now use Lemma \ref{lem:CorrespondingSymm} to find
		the symmetry group of  $\stiling$.
		
		\begin{thm}\label{thm:SymmGroupofS}
				Let $\tiling$ be a regular tiling, $\cmap(z)=\exp(2\pi i z/\alpha)$ with $\alpha$ admissible, and $\stiling=\{\cmap(T)\mid T\in\tiling\}$.
				Suppose $n=\gcd(L,R)$, where $L$ and $R$ are the integers that determine $\alpha$.
				Define $g_1(z)=\exp(2\pi i/n)z$ and $g_2(z)=\bar{z}$.
				\begin{enumerate}[\emph{(}a\emph{)}]
					\item If $\alpha$ is not balanced, then $\sg(\stiling)=\langle g_1\rangle\cong C_n$.
					
					\item If $\alpha$ is balanced, then $\sg(\stiling)=\langle g_1,g_2\rangle\cong D_n$.
				\end{enumerate}
		\end{thm}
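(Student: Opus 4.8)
The plan is to transfer the problem to $\sg(\tiling)$ by complementing Lemma~\ref{lem:CorrespondingSymm} with its converse. Because $\stiling$ has a single singular point, every element of $\sg(\stiling)$ fixes the origin, so $\sg(\stiling)$ is a finite subgroup of $\Orth(2)$ and hence cyclic or dihedral; it therefore suffices to pin down $\rg(\stiling)$ together with a generator and to decide whether $\stiling$ admits a reflection. The key step I would prove is: \emph{for every $g\in\sg(\stiling)$ there is an $f\in\sg(\tiling)$ corresponding to $g$.} A corresponding affine isometry always exists --- writing $g(w)=e^{2\pi i t}w$ (resp.\ $g(w)=e^{2\pi i t}\bar{w}$), one checks from \eqref{eq:ConformalMap} that $f(z)=z+t\alpha$ (resp.\ $f(z)=-(\alpha/\bar{\alpha})\bar{z}+t\alpha$) satisfies $g\cmap=\cmap f$. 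The content is that such an $f$ satisfies $f(\tiling)=\tiling$: for $T\in\tiling$ one has $\cmap(f(T))=g(\cmap(T))\in g(\stiling)=\stiling$, so $\cmap(f(T))=\cmap(T')$ for some $T'\in\tiling$; since $f$ is an isometry, $f(T)$ and $T'$ are compact, connected, and of diameter less than $|\alpha|$, on which $\cmap$ is one-to-one by Lemma~\ref{lem:SameImage}, and comparing the two resulting homeomorphisms onto $\cmap(T')$ forces $f(T)=T'+k\alpha\in\tiling$ for some $k\in\Z$ (the shift being constant by connectedness). Hence $f(\tiling)\subseteq\tiling$, and applying this to $g^{-1}$, lifted by $f^{-1}$, gives $f(\tiling)=\tiling$.

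With this converse available, I would first compute $\rg(\stiling)$. Since $n\mid L$ and $n\mid R$, in all three cases $\alpha/n\in\Lambda$ (for the $(3^6)$ tiling, $\alpha/n=(\tfrac{L}{n}+\tfrac{R}{n}\omega)(2+\omega)\in(2+\omega)$), so $z\mapsto z+\alpha/n$ is a translation symmetry of $\tiling$ corresponding to $g_1$; thus $g_1\in\sg(\stiling)$ by Lemma~\ref{lem:CorrespondingSymm}, and $g_1$ has order $n$. Conversely, if $g(w)=e^{2\pi i t}w$ lies in $\rg(\stiling)$, then by the converse the translation $f(z)=z+t\alpha$ lies in $\sg(\tiling)$, hence in $\tg(\tiling)$, so $t\alpha\in\Lambda$; in coordinates (using $\Lambda=\Z[i]$, $\Lambda=\Z[\omega]$, and, after dividing by $2+\omega$, $\alpha/(2+\omega)=L+R\omega\in\Z[\omega]$, respectively) this says exactly $tL,tR\in\Z$, which with $t=p/q$ in lowest terms forces $q\mid\gcd(L,R)=n$, so $t\in\tfrac1n\Z$ and $g\in\langle g_1\rangle$. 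Therefore $\rg(\stiling)=\langle g_1\rangle\cong C_n$.

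Finally I would treat reflections. If $g(w)=e^{2\pi i t}\bar{w}$ lies in $\sg(\stiling)$, then by the converse $f(z)=-(\alpha/\bar{\alpha})\bar{z}+t\alpha\in\sg(\tiling)$; but the linear part of any symmetry of a regular tiling normalizes $\Lambda$ and so has the form $z\mapsto\epsilon z$ or $z\mapsto\epsilon\bar{z}$ with $\epsilon$ a unit of $\Z[\xi]$ (the point groups being $D_4$ and $D_6$). Hence $-\alpha/\bar{\alpha}$, and so $\alpha/\bar{\alpha}$, is a unit of $\Z[\xi]$, i.e.\ $\alpha$ is balanced. Thus if $\alpha$ is not balanced, $\sg(\stiling)$ has no reflection and $\sg(\stiling)=\rg(\stiling)=\langle g_1\rangle\cong C_n$, which is~(a). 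If $\alpha$ is balanced, then $\alpha/\bar{\alpha}=\pm\xi^j$ by Theorem~\ref{thm:PerfectT}, so $-(\alpha/\bar{\alpha})$ is a unit, $z\mapsto-(\alpha/\bar{\alpha})\bar{z}\in\sg(\tiling)$ corresponds to $g_2$, and $g_2\in\sg(\stiling)$ by Lemma~\ref{lem:CorrespondingSymm}; then $\langle g_1,g_2\rangle\leq\sg(\stiling)$, and since $g_2g_1g_2^{-1}=g_1^{-1}$ this subgroup is dihedral of order $2n$, while $[\sg(\stiling):\rg(\stiling)]\leq2$ forces $|\sg(\stiling)|\leq2n$; so $\sg(\stiling)=\langle g_1,g_2\rangle\cong D_n$, which is~(b).

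The only genuinely delicate point is the converse to Lemma~\ref{lem:CorrespondingSymm} --- precisely, that a lift $f$ of $g\in\sg(\stiling)$ maps tiles of $\tiling$ to tiles of $\tiling$ --- and this hinges on the injectivity of $\cmap$ on sets of diameter less than $|\alpha|$, exactly as in the proof of Theorem~\ref{alphaconditions}. Everything that follows is bookkeeping: converting $t\alpha\in\Lambda$ into divisibility by $\gcd(L,R)$ in each of the three cases, and recalling the point groups of $p4m$ and $p6m$.
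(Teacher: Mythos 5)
Your proof is correct, but it takes a genuinely different route from the paper's. The paper argues from below: for each $m$ with $\alpha/m\in\Lambda$ it exhibits the corresponding translation $z\mapsto z+\alpha/m$, so $g_1\in\rg(\stiling)$ by Lemma~\ref{lem:CorrespondingSymm}; it settles the reflection question geometrically --- when $\alpha$ is not balanced, $\stiling$ is a Class~1 tiling whose families of spirals contain unequal numbers of positively- and negatively-oriented spirals (Table~\ref{stiling}), so no reflection can preserve $\stiling$ --- and when $\alpha$ is balanced it exhibits the corresponding reflection $f_2(z)=\epsilon\bar z$; the reverse inclusion $\rg(\stiling)\subseteq\langle g_1\rangle$ is left essentially implicit. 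You instead prove the converse of Lemma~\ref{lem:CorrespondingSymm}: every $g\in\sg(\stiling)$ has an explicit affine lift $f$ with $g\cmap=\cmap f$, and that lift is in fact a symmetry of $\tiling$, via injectivity of $\cmap$ on sets of diameter less than $|\alpha|$ together with the connectedness/local-finiteness argument showing the shift $k$ is constant on the image of a tile. That lifting statement is one the paper only records \emph{after} the theorem, as a consequence of it; you make it the key lemma and then read everything off algebraically: a rotation $e^{2\pi i t}$ forces $t\alpha\in\Lambda$, hence $tL,tR\in\Z$ and $t\in\tfrac1n\Z$ (this supplies the maximality of $\langle g_1\rangle$ that the paper glosses over), and a reflection forces $-\alpha/\bar\alpha$ into the point group $D_4$ or $D_6$, i.e.\ $\alpha$ balanced, replacing the spiral-count argument. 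Your route is more self-contained and quantitatively complete, at the cost of the tile-lifting argument needing the care you indicate; the paper's route is shorter and more visual but leans on the tabulated description of the spiral classes and on the reader accepting the upper bound on the rotation group without proof.
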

		\begin{proof}
			Let $m\in\N$ such that $m\mid\alpha$.  Then $g(z)=\exp(2\pi i/m)z\in\sg(\stiling)$, and a symmetry of $\tiling$ that corresponds to $g$ is the translation given by 
			$f(z)=z+\frac{\alpha}{m}$.  Hence, the rotation group $\rg(\stiling)$ is generated by $g_1$.		
			
			If  $\alpha$ is not balanced, then $\stiling$ falls under Class 1.  Since the number of positively-oriented spirals and the number of negatively-oriented spirals are not the
			same  (see Table~\ref{stiling}), $\stiling$ cannot have a reflection symmetry.  
			
			If $\alpha$ is balanced, then $g_2$ leaves $\stiling$ invariant.  Indeed, a symmetry of $\tiling$ that corresponds to $g_2$ is the reflection $f_2$ given by 
			$f_2(z)=\epsilon\bar{z}$ for some unit  $\epsilon$ of $\Z[\xi]$ (where $\xi=i$ if $\tiling$ is the $(4^4)$ tiling and $\xi=\omega$ if $\tiling$ is the $(6^3)$ or $(3^6)$ tiling).
		\end{proof}
		
		Not only does Theorem~\ref{thm:SymmGroupofS} give us the symmetry group of $\stiling$, but it also tells us that for any positive integer $n$, it is possible to construct a 
		tiling $\stiling$ that has rotation symmetry of order $n$ about the origin by choosing a suitable $\alpha$.   Moreover,  we obtain that for every $g\in\sg(\stiling)$, there exists 
		$f\in\sg(\tiling)$ such that $f$ corresponds to $g$.
		
		In fact, the tilings $\stiling$ are not only symmetrical, but also self-similar whose scaling factor is some power of $e$.  This has also been observed in \cite{Luck}, where 
		some tilings with a singularity even have scaling factors related to quasiperiodic patterns such as the golden ration $\tau$.  However, this aspect of $\stiling$ will not be 
		discussed here but will be published elsewhere.
		
		There is a one-to-one correspondence between a regular tiling $\tiling$ and the set $P$ of centers of tiles of $\tiling$. 
		Furthermore, $\tiling$ and $P$ have the same symmetry group, whose action on $\tiling$ is equivalent to its action on $P$.
		This means that every coloring of $\tiling$ may be viewed as a coloring of $P$, and both colorings have the same color symmetry group.
		We now proceed to show a similar relationship between $\stiling$ and $\qalpha:=\{\cmap(t)\mid t\in P\}$ .  
		
		\begin{lem}\label{thm:1-1CorrS&Q}
			Let $\tiling$ be a regular tiling, $P$ be the set of centers of $\tiling$, and $\cmap(z)=\exp(2\pi i z/\alpha)$ with $\alpha$ admissible.
			Suppose $\stiling=\{\cmap(T)\mid T\in\tiling\}$, $\qalpha=\{\cmap(t)\mid t\in P\}$, and
			$\lambda:\tiling\rightarrow P$ is the map that sends a tile of $\tiling$ to its center. 
			Then the map $\sigma:\stiling\rightarrow\qalpha$ that sends the image of a tile under $\cmap$ to the image of its center under $\cmap$, that is, 
			$(\sigma\cmap)(T)=(\cmap\lambda)(T)$ for $T\in\tiling$, defines a one-to-one correspondence between $\stiling$ and $Q_{\alpha}$.
		\end{lem}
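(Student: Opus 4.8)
The plan is to verify in turn that $\sigma$ is well defined, surjective, and injective, leaning on two facts: that $\lambda$ is a bijection commuting with translations, and that the fibres of $\cmap$ are exactly the $\alpha\Z$-orbits, as recorded in Lemma~\ref{lem:SameImage} and already exploited in the proof of Theorem~\ref{alphaconditions}.

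First I would collect the preliminary observations. Since distinct tiles of a regular tiling have distinct centers and every point of $P$ is the center of some tile, $\lambda\colon\tiling\to P$ is a bijection (this is the one-to-one correspondence between $\tiling$ and $P$ noted earlier); moreover $\lambda(T+v)=\lambda(T)+v$ for every $v\in\C$, because translating a tile translates its center. Also, because $\alpha\in\Lambda$, translation by $\alpha$ lies in $\tg(\tiling)\subseteq\sg(\tiling)$, so $T+k\alpha\in\tiling$ for all $T\in\tiling$, $k\in\Z$, and $\cmap(T+k\alpha)=\cmap(T)$ by Lemma~\ref{lem:SameImage}; in particular $P+\alpha=P$, so $\qalpha$ is genuinely assembled from the points of $P$.

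For well-definedness, suppose $T_1,T_2\in\tiling$ with $\cmap(T_1)=\cmap(T_2)$. Picking an interior point $z_1\in\interior(T_1)$, its image lies in $\cmap(T_2)$, so by Lemma~\ref{lem:SameImage} there is some $z_2\in T_2$ and $k\in\Z$ with $z_1=z_2+k\alpha$; arguing exactly as in the proof of Theorem~\ref{alphaconditions} (using that $\alpha\in\Lambda$ and $\tiling$ is a packing) this forces $T_2=T_1+k\alpha$. Hence $\lambda(T_2)=\lambda(T_1)+k\alpha$, and Lemma~\ref{lem:SameImage} again gives $(\cmap\lambda)(T_1)=(\cmap\lambda)(T_2)$. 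Thus the assignment $(\sigma\cmap)(T)=(\cmap\lambda)(T)$ is independent of the choice of preimage tile $T$, so $\sigma$ is a well-defined map $\stiling\to\qalpha$.

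Surjectivity is immediate: given $q=\cmap(t)\in\qalpha$ with $t\in P$, take the unique tile $T$ with $\lambda(T)=t$; then $\sigma(\cmap(T))=\cmap(\lambda(T))=q$. For injectivity, if $\sigma(\cmap(T_1))=\sigma(\cmap(T_2))$, i.e.\ $\cmap(\lambda(T_1))=\cmap(\lambda(T_2))$, then Lemma~\ref{lem:SameImage} yields $\lambda(T_2)=\lambda(T_1)+k\alpha=\lambda(T_1+k\alpha)$ for some $k\in\Z$, and since $\lambda$ is injective, $T_2=T_1+k\alpha$; therefore $\cmap(T_2)=\cmap(T_1+k\alpha)=\cmap(T_1)$, so the two elements of $\stiling$ coincide. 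The only step needing genuine care is well-definedness — that $\cmap$ identifies two tiles precisely when they differ by an element of $\alpha\Z$ — but this is exactly the fibre description already extracted from Lemma~\ref{lem:SameImage} in the proof of Theorem~\ref{alphaconditions}, so no new obstacle arises.
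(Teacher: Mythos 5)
Your proposal is correct and follows essentially the same route as the paper: well-definedness, injectivity, and surjectivity of $\sigma$ are each reduced to Lemma \ref{lem:SameImage} together with $\alpha\in\Lambda$. The only cosmetic difference is in the well-definedness step, where you deduce $T_2=T_1+k\alpha$ via the packing argument from Theorem \ref{alphaconditions} and then use that $\lambda$ commutes with translations, whereas the paper argues directly that the point of $P$ landing in $T_1$ must be its center; both are equally valid.
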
		
		\begin{proof}
			Suppose that $T_1,T_2\in\tiling$ such that $\cmap(T_1)=\cmap(T_2)$.
			Then there exists $t'\in T_1$ such that $\cmap(t')=\cmap(\lambda(T_2))$. 
			By Lemma \ref{lem:SameImage}, $t'=\lambda(T_2)+k\alpha$ for some $k\in\Z$.
			Since $\lambda(T_2)\in P$ and $\alpha\in\Lambda$, it follows that $t'=\lambda(T_1)$. 
			Consequently, $\sigma$ is well-defined. 
			
			Now assume that $\sigma(\cmap(T_1))=\sigma(\cmap(T_2))$.
			Then $\cmap(\lambda(T_1))=\cmap(\lambda(T_2))$.
			This implies that $\lambda(T_1)=\lambda(T_2)+k\alpha$ for some $k\in\Z$. 
			Since $\alpha\in\Lambda$, $T_1=T_2+k\alpha$ and we obtain $\cmap(T_1)=\cmap(T_2)$. 
			Thus, $\sigma$ is injective.  Clearly, $\sigma$ is surjective since $\lambda$ is surjective.
		\end{proof}
		
		Lemma \ref{thm:1-1CorrS&Q} gives us the following commutative diagram.
		\begin{center}
			\begin{tikzpicture}
				\matrix (m) [matrix of math nodes, row sep=3.5em, column sep=5em, text height=2ex, text depth=0.25ex]
				{\tiling & \stiling  \\
					P & \qalpha \\};
				\path[-stealth]
					(m-1-1) edge node [left] {$\lambda$} (m-2-1)
					(m-2-1) edge node [above] {$\cmap$} (m-2-2) 
					(m-1-1) edge node [above] {$\cmap$} (m-1-2)
				(m-1-2) edge node [right] {$\sigma$} (m-2-2);
			\end{tikzpicture}
		\end{center}
		
		\begin{thm}\label{thm:SameSymmetryGroup}
			Let $P$ be the set of centers of a regular tiling $\tiling$.  Suppose $\cmap(z)=\exp(2\pi i z/\alpha)$ with $\alpha$ admissible,
			$\stiling=\{\cmap(T)\mid T\in\tiling\}$, and $\qalpha=\{\cmap(t)\mid t\in P\}$.
			Then $\sg(\stiling)\subseteq\sg(\qalpha)$ and the action of $\sg(\stiling)$ on $\stiling$ and $\qalpha$ are equivalent.
		\end{thm}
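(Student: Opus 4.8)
The plan is to assemble the statement from two facts already established: the commutative square $\sigma\cmap=\cmap\lambda$ of Lemma~\ref{thm:1-1CorrS&Q}, and the remark following Theorem~\ref{thm:SymmGroupofS} that every $g\in\sg(\stiling)$ has a \emph{corresponding} $f\in\sg(\tiling)$, i.e.\ an isometry $f$ of $\C$ satisfying $g\cmap=\cmap f$. I will also use the fact recorded just before Lemma~\ref{thm:1-1CorrS&Q}, that a regular tiling $\tiling$ and its set of centers $P$ share the same symmetry group and that $\lambda$ intertwines the two actions, so that $\lambda f=f\lambda$ for every $f\in\sg(\tiling)$.

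First I would prove the containment $\sg(\stiling)\subseteq\sg(\qalpha)$. Fix $g\in\sg(\stiling)$ and choose a corresponding $f\in\sg(\tiling)$. Since $f$ is also a symmetry of $P$, we have $f(P)=P$, hence
\[
g(\qalpha)=g\bigl(\cmap(P)\bigr)=\cmap\bigl(f(P)\bigr)=\cmap(P)=\qalpha .
\]
Because $g$ is an isometry of $\C^{\ast}$ (indeed one fixing the origin, as every element of $\sg(\stiling)$ must), this says precisely that $g\in\sg(\qalpha)$, giving the desired inclusion.

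Next I would show that the bijection $\sigma$ of Lemma~\ref{thm:1-1CorrS&Q} intertwines the resulting $\sg(\stiling)$-actions, i.e.\ $\sigma\circ g=g\circ\sigma$ on $\stiling$ for every $g\in\sg(\stiling)$; this is exactly the assertion that the two actions are equivalent. Given $s\in\stiling$, write $s=\cmap(T)$ for some $T\in\tiling$ and let $f\in\sg(\tiling)$ correspond to $g$. Using $g\cmap=\cmap f$ together with $f(T)\in\tiling$, then $\sigma\cmap=\cmap\lambda$, then $\lambda f=f\lambda$, and finally $\sigma\cmap=\cmap\lambda$ once more, we get
\[
\sigma\bigl(g(s)\bigr)=\sigma\bigl(\cmap(f(T))\bigr)=\cmap\bigl(\lambda(f(T))\bigr)=\cmap\bigl(f(\lambda(T))\bigr)=g\bigl(\cmap(\lambda(T))\bigr)=g\bigl(\sigma(s)\bigr).
\]

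The one point needing a word of care is well-definedness: neither the representative $T$ with $s=\cmap(T)$ nor the corresponding $f$ is unique. But $\sigma$ and $g$ are already known to be well-defined maps, so $\sigma\circ g$ and $g\circ\sigma$ are well-defined independently of the choices, and it suffices to check their equality on one chosen representative, which the displayed chain does. I do not expect a genuine obstacle here: the substantive work is already carried out in Lemma~\ref{thm:1-1CorrS&Q} and Theorem~\ref{thm:SymmGroupofS}, and what remains is essentially a diagram chase.
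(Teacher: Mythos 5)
Your proposal is correct and follows essentially the same route as the paper: the inclusion via $g(\qalpha)=\cmap(f(P))=\cmap(P)$ using a corresponding $f$, and the equivalence of actions via the same chain $\sigma g\cmap(T)=\cmap(\lambda(f(T)))=\cmap(f(\lambda(T)))=g\sigma\cmap(T)$, only read in the opposite order. Your extra remark on well-definedness of the computation is a harmless refinement of what the paper leaves implicit.
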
		
		\begin{proof}
			Let $g\in\sg(\mathscr{S}_{\alpha})$ and $\varphi_{\alpha}(T)\in\mathscr{S}_{\alpha}$. 
			Suppose $f\in\sg(\mathscr{T})$ corresponds to $g$. 
			Then $g(\qalpha)=\cmap(f(P))=\cmap(P)=\qalpha$, and so $\sg(\stiling)\subseteq\sg(\qalpha)$.  
			Let $\sigma$ be as defined in Lemma \ref{thm:1-1CorrS&Q}.
			Since the action of $f$ on $\mathscr{T}$ is equivalent to its action on $P$, we have 
			\[(g\sigma)(\cmap(T))=g(\cmap(\lambda(T)))=\varphi_{\alpha}(f(\lambda(T)))=
				\varphi_{\alpha}(\lambda(f(T))=\sigma(\varphi_{\alpha}(f(T)))=(\sigma g)(\varphi_{\alpha}(T)).\]
			This shows that the action of $\sg(\stiling)$ on $\stiling$ is equivalent to its action on $\qalpha$.
		\end{proof}
		
		Hence, there is a one-to-one correspondence between colorings of $\stiling$ and colorings of $\qalpha$ via the map $\sigma$.  Furthermore, elements of $\sg(\stiling)$ that 
		are color symmetries of a coloring of $\qalpha$ are also color symmetries of the associated coloring of $\stiling$. 
		
	\section{Color Symmetries of $\stiling$}\label{sec:ColorSymmetry}
		Our aim in this section is to come up with chirally and fully perfect colorings of the tiling $\stiling=\cmap(\tiling)$ that has a singularity at the origin.
		In Lemma \ref{thm:1-1CorrS&Q}, we defined $\lambda$ to be the bijection that sends a tile $T\in\tiling$ to its center $t\in P$, 
		and $\sigma$ to be the bijection that sends $\cmap(T)\in \stiling$ to $\cmap(\lambda(T))\in\qalpha$.
		If $\coloring_P=\{X_i\}_{i=1}^m$ is a coloring of $P$, then $\coloring_{\tiling}=\{\lambda^{-1}(X_i)\}_{i=1}^{m}$ gives a coloring of $\tiling$.
		Similarly, if $\coloring_{\qalpha}=\{Y_i\}_{i=1}^k$ is a coloring of $\qalpha$, then $\coloring_{\stiling}=\{\sigma^{-1}(Y_i)\}_{i=1}^k$ yields a coloring of $\stiling$. 
		In addition, Theorem \ref{thm:SameSymmetryGroup} assures us that if the elements of $\rg(\stiling)$ (or $\sg(\stiling)$) permute the colors of $\coloring_{\qalpha}$,
		then $\coloring_{\stiling}$ is a chirally (or fully) perfect coloring.  All that remains is to obtain colorings of $\qalpha$ from colorings of $P$.
		
		\subsection{Ideal Colorings of $P$}		
			We start by identifying the chirally and fully perfect colorings of the set $P$ of centers of the regular tiling $\tiling$.			
			If $\tiling$ is the $(4^4)$ or the $(6^3)$ tiling, then $P=\Z[i]$ or $P=\Z[\omega]$.  Hence, Theorem \ref{thm:PerfectT} applies and
			we already have the chirally and fully perfect colorings of $P$.
			
			In the case where $\tiling$ is the $(3^6)$ tiling, $P$ is not a lattice. 
			Nonetheless, a coloring of $P$ may still be obtained from colorings of the lattice $\Z[\omega]$.
			Indeed, let $\coloring_{\Z[\omega]}=\{ X_{i}\} _{i=1}^{m}$ be the coloring of $\Z[\omega]$ induced by a sublattice $\Gamma$ of $\Z[\omega]$.
			Since $P\subset\Z[\omega]$, we obtain a coloring of $P$ from $\coloring_{\Z[\omega]}$ given by
			\[\coloring_P=\{X_i\cap P \mid X_i\cap P\neq\varnothing,\:1\leq i\leq m\}.\]
			When needed, we reindex so that $\coloring_P=\left\{ X_{i}\cap P\right\} _{i=1}^{m'}$ for some $m'\leq m$. 
			In fact, $m'$ is either $m$ or $\frac{2}{3}m$, as shown in the next lemma.
			
			\begin{lem}\label{thm:No.ofColors(3^6)}
				Let $\Gamma$ be a sublattice of index $m$ in $\Z[\omega]$, and 
				$\coloring_{\Z[\omega]}=\{X_i\}_{i=1}^m$ be the coloring of $\Z[\omega]$ induced by $\Gamma$.
				\begin{enumerate}[\emph{(}a\emph{)}]
					\item If $\Gamma$ is a sublattice of $P_0=(2+\omega)$, then the coloring of $P=\Z[\omega]\setminus P_0$ obtained from $\coloring_{\Z[\omega]}$ is 
					$\coloring_P=\{X_i\}_{i=1}^{m'}$, where $m'=\frac{2}{3}m$.
					
					\item If $\Gamma$ is not a sublattice of $P_0$, then the coloring of $P$ obtained from $\coloring_{\Z[\omega]}$ is $\coloring_P=\{X_i\cap P\}_{i=1}^{m}$. 
				\end{enumerate} 
			\end{lem}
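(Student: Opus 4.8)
The plan is to count how many of the $m$ colors of $\coloring_{\Z[\omega]}$ still occur after restricting attention to $P$. First I would recall that $P_0=(2+\omega)$ is an ideal of index $|2+\omega|^2=3$ in $\Z[\omega]$, so that $\Z[\omega]=P_0\sqcup P_1\sqcup P_2$ and $P=P_1\cup P_2=\Z[\omega]\setminus P_0$. Since each $X_i$ is a coset of $\Gamma$, the color $i$ occurs in $P$ exactly when $X_i\cap P\neq\varnothing$, i.e.\ exactly when $X_i\not\subseteq P_0$. Thus the whole lemma reduces to counting the cosets of $\Gamma$ that happen to lie inside $P_0$, and to deciding, for the surviving colors, whether $X_i$ is merely met by $P$ or wholly contained in $P$.

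For part (a), assuming $\Gamma\subseteq P_0$, I would note that every coset of $\Gamma$ then lies inside a single coset of $P_0$ (if $x+\Gamma$ meets $y+P_0$ then $x+\Gamma\subseteq x+P_0=y+P_0$). Grouping the $m$ cosets of $\Gamma$ according to which of $P_0,P_1,P_2$ contains them, multiplicativity of the index along $\Gamma\le P_0\le\Z[\omega]$ gives that each $P_0$-coset is a disjoint union of exactly $[P_0:\Gamma]=[\Z[\omega]:\Gamma]/[\Z[\omega]:P_0]=m/3$ cosets of $\Gamma$ (in particular $3\mid m$). The $m/3$ cosets lying inside $P_0$ drop out, while each of the remaining $2m/3$ cosets lies wholly inside $P_1$ or $P_2$, hence wholly inside $P$, so $X_i\cap P=X_i$ for those. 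After reindexing, this yields $\coloring_P=\{X_i\}_{i=1}^{m'}$ with $m'=\tfrac{2}{3}m$.

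For part (b), assuming $\Gamma\not\subseteq P_0$, I would show that no coset of $\Gamma$ is contained in $P_0$: if $x+\Gamma\subseteq P_0$, then taking the element with $\gamma=0$ gives $x\in P_0$, and then $\gamma=(x+\gamma)-x\in P_0$ for every $\gamma\in\Gamma$ because $P_0$ is an additive subgroup, forcing $\Gamma\subseteq P_0$, a contradiction. Hence $X_i\cap P\neq\varnothing$ for every $i$, all $m$ colors survive, and since the sets $X_i\cap P$ are nonempty and pairwise disjoint they are pairwise distinct; thus $\coloring_P=\{X_i\cap P\}_{i=1}^m$.

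I do not expect a genuine obstacle here, as the content is elementary coset bookkeeping. The one place needing a moment's care is part (a): the count ``$m/3$ cosets of $\Gamma$ inside each $P_0$-coset'' relies on the multiplicativity of the index (equivalently the correspondence theorem) for the chain $\Gamma\le P_0\le\Z[\omega]$, and one must also observe that a surviving color is not just met by $P$ but entirely contained in $P$, which is precisely what permits the conclusion to be written with $X_i$ rather than $X_i\cap P$.
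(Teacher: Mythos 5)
Your proposal is correct and follows essentially the same route as the paper: classify each coset $X_i$ of $\Gamma$ by whether it lies in $P_0$ or is disjoint from it, count the dropped cosets as $[P_0:\Gamma]=m/3$ via index multiplicativity, and note that surviving cosets satisfy $X_i\cap P=X_i$ in case (a), while in case (b) no coset can lie in $P_0$ since that would force $\Gamma\subseteq P_0$. Your write-up just makes explicit a couple of coset-bookkeeping steps the paper leaves implicit.
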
			
			\begin{proof}
				Suppose $\Gamma$ is a sublattice of $P_0$, and $X_i\in\coloring_{\Z[\omega]}$.
				If $X_i$ contains an element of $P_0$, then $X_i\subseteq P_0$ and consequently, 
				$X_i\cap P=\varnothing$.  Otherwise, $X_i\cap P=X_i$.  Since $m=[\Z[\omega]:P_0][P_0:\Gamma]=3[P_0:\Gamma]$, 
				the number of cosets of $\Gamma$ that lie entirely in $P$ is $m-[P_0:\Gamma]=\frac{2}{3}m$.  Therefore, $\coloring_P=\{X_i\}_{i=1}^{m'}$, where $m'=\frac{2}{3}m$.
				
				If $\Gamma$ is not a sublattice of $P_0$, then every $X_i\in\coloring_{\Z[\omega]}$ must contain an element which is not in $P_0$.  
				Hence, $X_i\cap P\neq \varnothing$ for all $i\in\{1,\ldots,m\}$, which yields the claim.
			\end{proof}
			
			We now establish a relationship between perfect colorings of $\Z[\omega]$ and perfect colorings of $P$.
			Observe that the generators $h_1$, $h_2$, $h_3$, and $h_4$ of $\sg(\mathscr{T})$ (see Table \ref{tab:SymmetryGroupT}) permute $P_0$, $P_1=-1+(2+\omega)$ and $P_2=1+(2+\omega)$.  
			In particular, $h_1(P_0)=P_0$, $h_1(P_1)=P_2$ and $h_1(P_2)=P_1$, while $h_2$, $h_3$, and $h_4$ leave each coset invariant.  We use this fact in the proofs of 
			Lemma \ref{lem:orthogonalpermute} and Theorem \ref{thm:perfectcoloringP}.
			
			\begin{lem}\label{lem:orthogonalpermute}
				Let $\Gamma$ be a sublattice of $\Z[\omega]$ such that $[\Z[\omega]:\Gamma]=m$. 
				Suppose $\coloring_{\Z[\omega]}$ is the coloring of $\Z[\omega]$ induced by $\Gamma$, and $\coloring_P$ is the coloring of $P$ obtained from $\coloring_{\Z[\omega]}$.  
				If $f\in \sg(\mathscr{T})\cap \Orth(2)$ is a color symmetry of $\coloring_P$, then $f$ is a color symmetry of $\coloring_{\Z[\omega]}$.
			\end{lem}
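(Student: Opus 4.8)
The plan is to reduce everything to the single claim $f(\Gamma)=\Gamma$. Since $f$ lies in $\Orth(2)$ it is linear: $f(0)=0$ and $f(x+y)=f(x)+f(y)$. Moreover, writing $f$ as a word in the generators $h_1,\dots,h_4$ of $\sg(\mathscr{T})$ and their inverses, and using that $h_1$ fixes $P_0$ and interchanges $P_1,P_2$ while $h_2,h_3,h_4$ fix each of $P_0,P_1,P_2$, we get that $f$ fixes $P_0$ and permutes $\{P_1,P_2\}$; hence $f(P)=P$ and $f(\Z[\omega])=f(P_0\cup P)=\Z[\omega]$, so $f$ restricts to an automorphism of the group $(\Z[\omega],+)$. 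Because the colors of $\coloring_{\Z[\omega]}$ are exactly the cosets of $\Gamma$, such an $f$ permutes these colors if and only if $f(\Gamma)=\Gamma$; thus it suffices to prove the latter.

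First I would record what being a color symmetry of $\coloring_P$ gives: if $p,q\in P$ lie in the same coset of $\Gamma$, i.e. $p-q\in\Gamma$, then they carry the same color of $\coloring_P$, hence so do $f(p),f(q)\in P$, i.e. $f(p)-f(q)\in\Gamma$. The key auxiliary observation is that every $\gamma\in\Gamma$ -- indeed every element of $\Z[\omega]$ -- is a difference of two points of $P$. To see this, recall that $[\Z[\omega]:(2+\omega)]=3$ and that $P=P_1\cup P_2$ is the union of the two cosets of $P_0=(2+\omega)$ different from $P_0$. Choose $q\in P$ whose coset modulo $(2+\omega)$ differs from that of $-\gamma$ (at least one of $P_1,P_2$ works, since they are distinct and at most one equals the coset of $-\gamma$); then $q+\gamma\notin P_0$, so $p:=q+\gamma\in P$, and $p-q=\gamma$.

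Combining the two, $f(\gamma)=f(p)-f(q)\in\Gamma$ for every $\gamma\in\Gamma$, so $f(\Gamma)\subseteq\Gamma$. Since $f|_{\Z[\omega]}$ is a group automorphism, $[\Z[\omega]:f(\Gamma)]=[\Z[\omega]:\Gamma]=m$, which together with $f(\Gamma)\subseteq\Gamma$ forces $f(\Gamma)=\Gamma$. Therefore $f$ permutes the cosets of $\Gamma$; that is, $f$ is a color symmetry of $\coloring_{\Z[\omega]}$.

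I expect the crux to be the passage from information about $P$ to information about all of $\Z[\omega]$: the hypothesis only constrains $f$ on $P=P_1\cup P_2$, and the whole argument hinges on the elementary fact that $P-P=\Z[\omega]$, for which the coset structure modulo $(2+\omega)$ is what makes things work. An alternative would be to split into the cases $\Gamma\subseteq P_0$ and $\Gamma\not\subseteq P_0$ along the lines of Lemma~\ref{thm:No.ofColors(3^6)} -- in the first case using that the cosets of $\Gamma$ meeting $P$ are the $\pm1$-translates of those contained in $P_0$, in the second that $X\mapsto X\cap P$ is a bijection between cosets of $\Gamma$ and colors of $\coloring_P$ -- but the uniform argument above avoids this case distinction.
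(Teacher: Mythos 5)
Your proof is correct, and it reaches the same pivotal claim as the paper --- everything reduces to $f(\Gamma)=\Gamma$, after which $f$ permutes the cosets of $\Gamma$ (the paper outsources this last step to Theorem 2 of de las Pe\~{n}as--Felix, which you reprove inline using linearity of $f$ and $f(\Z[\omega])=\Z[\omega]$; both are fine). Where you genuinely diverge is in how the containment $f(\Gamma)\subseteq\Gamma$ is obtained. The paper splits into the two cases of Lemma \ref{thm:No.ofColors(3^6)}: when $\Gamma\subseteq P_0$ it works with whole cosets lying inside $P$, and when $\Gamma\not\subseteq P_0$ it first shows $f(\Gamma\cap P_0)=\Gamma\cap P_0$ via elements $t_0\in\Gamma\cap P_0$, $t\in\Gamma\cap P$, and then needs the extra observation that $f(\Gamma)=(\Gamma\cap P_0)\cup(X_j\cap P)$ must itself be a sublattice to force $X_j=\Gamma$. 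Your single observation that $P-P=\Z[\omega]$ (every $\gamma$ is a difference $p-q$ of two points of $P$ lying in the same $\Gamma$-coset, guaranteed by the three-coset structure modulo $(2+\omega)$) handles both cases uniformly and makes the containment a one-line consequence of ``same color $\Rightarrow$ images have the same color'' plus linearity; the index (or isometry) argument then upgrades containment to equality exactly as in the paper. The trade-off: your argument is shorter and avoids the slightly delicate sublattice-structure step of the paper's second case, while the paper's case analysis stays closer to the explicit description of the color classes of $\coloring_P$ set up in Lemma \ref{thm:No.ofColors(3^6)}.
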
			
			\begin{proof}
				Let $\coloring_{\Z[\omega]}=\{X_i\}_{i=1}^m$.  It follows from \cite[Theorem 2]{Pena} that $f$ is a color symmetry of $\coloring_{\Z[\omega]}$ whenever $f$ 
				leaves $\Gamma$ invariant.
			
				First, we consider the case when $\Gamma$ is a sublattice of $P_0$. Then $\coloring_P=\{X_i\}_{i=1}^{m'}$.  By assumption, given $X_i\in\coloring_P$,
				$f(X_i)=X_j$ for some $j\in\{1,\ldots,m'\}$.  If we write $X_i=t_i+\Gamma$ and $X_j=t_j+\Gamma$, then for all $\ell\in\Gamma$,
				$f(t_i+\ell)\in t_j+\Gamma$.  This implies that $f(\ell)\in (t_j-f(t_i))+\Gamma=\Gamma$.  Hence, $\Gamma$ contains $f(\Gamma)$ and so $f(\Gamma)=\Gamma$ because $f$ is 
				an isometry.
				
				Now suppose $\Gamma$ is not a sublattice of $P_0$.  We have $\coloring_P=\{X_i\cap P\}_{i=1}^{m}$. 
				Let $t_0\in\Gamma\cap P_0$.  Then $f(t_0)\in P_0$.  Given $t\in\Gamma\cap P$, we must have  $t_0+t\in\Gamma\cap P$.
				Since $f$ permutes the colors of $\coloring_P$, $f(t)$ and $f(t_0+t)$ must belong to the same coset of $\Gamma$.
				We then have $f(t_0)=f(t_0+t)-f(t)\in\Gamma$.  Thus, $f(\Gamma\cap P_0)\subseteq\Gamma\cap P_0$, and it follows that 
				$f(\Gamma\cap P_0)=\Gamma\cap P_0$. 				
				We obtain $f(\Gamma)=(\Gamma\cap P_0)\cup(X_j\cap P)$, for some $j\in\{1,\ldots,m\}$. 
				However, $f(\Gamma)$ must be a sublattice of $\Z[\omega]$, so $X_j=\Gamma$ and $f(\Gamma)=\Gamma$.
			\end{proof}
			
			\begin{thm}\label{thm:perfectcoloringP}
				Let $\Gamma$ be a sublattice of index $m$ in $\Z[\omega]$.
				Then the coloring $\coloring_{\Z[\omega]}$ of $\Z[\omega]$ induced by $\Gamma$ is chirally (respectively, fully) perfect if and only if the coloring $\coloring_P$ 
				of $P$ obtained from $\coloring_{\Z[\omega]}$ is chirally (respectively, fully) perfect. 
			\end{thm}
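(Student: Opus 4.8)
The plan is to prove the equivalence purely group-theoretically, with Lemma~\ref{lem:orthogonalpermute} as the only substantive input; Theorem~\ref{thm:PerfectT} will not be needed. Throughout, $\tiling$ is the $(3^6)$ tiling, so $P=P_1\cup P_2$; write $\coloring_{\Z[\omega]}=\{X_i\}_{i=1}^m$ for the coloring induced by $\Gamma$, and recall from Table~\ref{tab:SymmetryGroupT} the symmetries $h_1(z)=(1+\omega)z$ and $h_2(z)=\bar z$. I would first record three observations. First, since $\tiling$ and $P$ have the same symmetry group and the generators $h_1,h_2,h_3,h_4$ of $\sg(\tiling)$ in Table~\ref{tab:SymmetryGroupT} all preserve $\Z[\omega]$, we get $\sg(P)=\sg(\tiling)\subseteq\sg(\Z[\omega])$ and hence also $\rg(P)\subseteq\rg(\Z[\omega])$; moreover $h_1\in\rg(P)$ and $h_2\in\sg(P)$, and both lie in $\Orth(2)$. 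Second, every $f\in\sg(P)$ fixes $P$ setwise, so if such an $f$ also permutes the blocks $\{X_i\}_{i=1}^m$, it carries each nonempty $X_i\cap P$ to $f(X_i)\cap f(P)=f(X_i)\cap P$, again a nonempty block of the collection defining $\coloring_P$; thus $f$ is a color symmetry of $\coloring_P$. Third, $\sg(\Z[\omega])$ is generated by $h_1$, $h_2$, and its translation subgroup $\tg(\Z[\omega])$ (see the $(6^3)$ column of Table~\ref{tab:SymmetryGroupT}, whose tile centers form $\Z[\omega]$), while $\tg(\Z[\omega])\subseteq\mathcal H_{\coloring_{\Z[\omega]}}$ always by \cite[Theorem~1]{Pena}.

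For the forward implications, assume $\coloring_{\Z[\omega]}$ is chirally (respectively, fully) perfect, so every element of $\rg(\Z[\omega])$ (resp.\ $\sg(\Z[\omega])$) permutes $\{X_i\}_{i=1}^m$. Given $f\in\rg(P)$ (resp.\ $f\in\sg(P)$), the first observation places $f$ in $\rg(\Z[\omega])$ (resp.\ $\sg(\Z[\omega])$), so $f$ permutes $\{X_i\}$, and then the second observation shows $f\in\mathcal H_{\coloring_P}$. Hence $\rg(P)\subseteq\mathcal H_{\coloring_P}$ (resp.\ $\sg(P)\subseteq\mathcal H_{\coloring_P}$), i.e.\ $\coloring_P$ is chirally (resp.\ fully) perfect.

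For the converse, suppose $\coloring_P$ is chirally perfect, so $h_1\in\rg(P)\subseteq\mathcal H_{\coloring_P}$. Since $h_1\in\sg(\tiling)\cap\Orth(2)$, Lemma~\ref{lem:orthogonalpermute} gives $h_1\in\mathcal H_{\coloring_{\Z[\omega]}}$. As $\mathcal H_{\coloring_{\Z[\omega]}}$ is a subgroup of $\sg(\Z[\omega])$ containing $\tg(\Z[\omega])$, it then contains $\langle h_1,\tg(\Z[\omega])\rangle=\rg(\Z[\omega])$, so $\coloring_{\Z[\omega]}$ is chirally perfect. If $\coloring_P$ is in fact fully perfect, then also $h_2\in\sg(P)\subseteq\mathcal H_{\coloring_P}$, and Lemma~\ref{lem:orthogonalpermute} likewise yields $h_2\in\mathcal H_{\coloring_{\Z[\omega]}}$; combining this with $\rg(\Z[\omega])\subseteq\mathcal H_{\coloring_{\Z[\omega]}}$ and the third observation gives $\mathcal H_{\coloring_{\Z[\omega]}}=\sg(\Z[\omega])$, i.e.\ $\coloring_{\Z[\omega]}$ is fully perfect.

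The only delicate point is the converse's appeal to Lemma~\ref{lem:orthogonalpermute}: it is what allows a point-group color symmetry of the restricted coloring $\coloring_P$ to be lifted back to a color symmetry of $\coloring_{\Z[\omega]}$ (via the invariance $f(\Gamma)=\Gamma$ behind \cite[Theorem~2]{Pena}). Everything else is bookkeeping — the forward direction merely restricts colorings from $\Z[\omega]$ to $P$, and the reassembly of $\rg(\Z[\omega])$ and $\sg(\Z[\omega])$ from $h_1$, $h_2$, and the translations is immediate from Table~\ref{tab:SymmetryGroupT}.
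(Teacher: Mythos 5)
Your proof is correct and takes essentially the same route as the paper's: the forward direction restricts the permuted cosets to $P$ (using that every element of $\sg(P)=\sg(\tiling)$ preserves both $P$ and $\Z[\omega]$), and the converse lifts $h_1$ (and $h_2$) to color symmetries of $\coloring_{\Z[\omega]}$ via Lemma~\ref{lem:orthogonalpermute} and then invokes the fact that lattice translations are automatically color symmetries. Your only addition is to make explicit the generation facts $\rg(\Z[\omega])=\langle h_1,\tg(\Z[\omega])\rangle$ and $\sg(\Z[\omega])=\langle h_1,h_2,\tg(\Z[\omega])\rangle$, which the paper's proof leaves implicit.
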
			
			\begin{proof}
				If $h_1$ permutes (and $h_2$ permutes) the colors of $\coloring_P$, then Lemma \ref{lem:orthogonalpermute} guarantees that $h_1$ permutes (and $h_2$ permutes) the colors 
				of $\coloring_{\Z[\omega]}$.
				Since $h_3$ and $h_4$ are translation symmetries of $\Z[\omega]$, they must be color symmetries of $\coloring_{\Z[\omega]}$. 
				This proves the backward direction.
				
				Suppose $\coloring_{\Z[\omega]}=\{X_i\}_{i=1}^m$ is chirally (fully) perfect.  Then given $X_i\in \coloring_{\Z[\omega]}$ for which $X_i\cap P\neq\varnothing$,
				$h_1(X_i)=X_j$ for some $j\in\{1,\ldots,m\}$.  
				We then have $h_1(X_i\cap P)=X_j\cap P\neq\varnothing$.
				Thus, $h_1$ permutes the colors of $\coloring_P$. 
				In a similar manner, we can show that $h_3$, $h_4$ (and $h_2$) are also color symmetries of $\coloring_P$. 
			\end{proof}
			
			Hence, even if $\tiling$ is the $(3^6)$ tiling, ideal colorings of $\Z[\omega]$ give rise to chirally or fully perfect colorings of $P$.
			Conversely, if a coloring of $P$ obtained from a sublattice coloring of $\Z[\omega]$ is chirally or fully perfect, then the coloring 
			of $\Z[\omega]$ must be an ideal coloring.
		
		\subsection{Compatibility between Sublattice Colorings of $P$ and Conformal Maps $\cmap$}
			Given a coloring $\coloring_P=\{X_i\} _{i=1}^{m}$ of $P$, the set $\{\cmap(X_i)\}_{i=1}^{m}$ does not necessarily yield a coloring of $\qalpha$.
			The problem is since $\cmap$ is not one-to-one, it is possible that $\cmap(X_i)\cap\cmap(X_{j})\neq\varnothing$ for some $i,j\in\{1,\ldots,m\}$ and $i\neq j$.
			In that case, two points of $P$ that are assigned different colors in $\coloring_P$ have the same image under $\cmap$. 
			If it happens that $\{\cmap(X_i)\} _{i=1}^{m}$ is a coloring of $\qalpha$, then we say that $\coloring_P$ is \emph{compatible with} $\cmap$. 
			That is, each point of $\qalpha$ is assigned exactly one color when $\cmap$ is applied on $\coloring_P$.
			We denote the coloring of $\qalpha$ obtained in this manner by $\cmap(\coloring_P)$.
			
			We now characterize the sublattice colorings of $P$ that are compatible with $\cmap$. 
			
			\begin{thm}\label{thm:CompatibleSublattice}
				Let $P$ be the set of centers of a regular tiling, and $\cmap(z)=\exp(2\pi i z/\alpha)$ with $\alpha$ admissible.
				Then the coloring $\coloring_P$ of $P$ induced by a sublattice $\Gamma$ is compatible with $\cmap$ if and only if $\alpha\in\Gamma$.
			\end{thm}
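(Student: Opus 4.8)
The plan is to translate ``compatibility'' into a statement about cosets of $\Gamma$ by means of Lemma~\ref{lem:SameImage}, and then to push everything through using the single structural fact that $P$ is invariant under translation by every element of $\Lambda$. This last fact holds in all three cases: for the $(4^4)$ and $(6^3)$ tilings $P=\Lambda$ is itself a group, while for the $(3^6)$ tiling translation by $\Lambda=P_0=(2+\omega)$ fixes each of $P_1$ and $P_2$, hence fixes $P=P_1\cup P_2$. I will also use repeatedly that, writing $\coloring_{\Z[\xi]}=\{X_i\}$ for the coset coloring of the ambient lattice induced by $\Gamma$ (with $\xi=i$ for the $(4^4)$ tiling and $\xi=\omega$ otherwise), two points of $P$ receive the same color in $\coloring_P$ precisely when they lie in a common coset of $\Gamma$: this is immediate when $P$ is the whole lattice, and for the $(3^6)$ tiling it follows because there $\coloring_P$ consists, after reindexing, of the nonempty sets $X_i\cap P$.

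For the forward implication I would argue directly: assume $\coloring_P$ is compatible with $\cmap$, fix any $t\in P$, and note that $t+\alpha\in P$ since $\alpha\in\Lambda$. By Lemma~\ref{lem:SameImage} we have $\cmap(t)=\cmap(t+\alpha)$, and since $\cmap(\coloring_P)$ is a well-defined coloring of $\qalpha$, this common point must carry a single color; hence $t$ and $t+\alpha$ lie in the same coset of $\Gamma$, and subtracting gives $\alpha\in\Gamma$. For the converse I would assume $\alpha\in\Gamma$ and verify that the sets $\cmap(X_i\cap P)$ are pairwise disjoint, surjectivity onto $\qalpha=\cmap(P)$ and finiteness of the color set being automatic. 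If $\cmap(X_i\cap P)$ and $\cmap(X_j\cap P)$ meet, choose $t_i\in X_i\cap P$ and $t_j\in X_j\cap P$ with $\cmap(t_i)=\cmap(t_j)$; Lemma~\ref{lem:SameImage} gives $t_i-t_j=k\alpha$ for some $k\in\Z$, whence $t_i-t_j\in\Gamma$ because $\alpha\in\Gamma$, so $X_i$ and $X_j$ are the same coset of $\Gamma$ and therefore $i=j$.

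I expect the only real point of care to be keeping the $(3^6)$ tiling in line with the lattice cases: there $P$ is not closed under subtraction, so every step must use only the two properties $P+\alpha=P$ and ``same color $\iff$ same coset of $\Gamma$'', both of which are already in hand. Beyond that the argument is a short, symmetric application of Lemma~\ref{lem:SameImage} in both directions, with no delicate estimates.
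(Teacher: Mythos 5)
Your proposal is correct and follows essentially the same argument as the paper: the forward direction applies Lemma~\ref{lem:SameImage} to $t$ and $t+\alpha$ (using $\alpha\in\Lambda$ so that $t+\alpha\in P$) to conclude $\alpha\in\Gamma$, and the converse uses the same lemma to show points of $P$ with equal image under $\cmap$ lie in a common coset of $\Gamma$. Your added remarks spelling out $P+\alpha=P$ and the coset description of $\coloring_P$ in the $(3^6)$ case merely make explicit what the paper uses implicitly.
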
			
			\begin{proof}
				Suppose $\coloring_P$ is compatible with $\cmap$.  Let $t\in P$. 
				Since $\alpha\in\Lambda$, $t+\alpha\in P$.
				By Lemma \ref{lem:SameImage}, $\cmap(t)=\cmap(t+\alpha)$ and so $t$ and $t+\alpha$ must belong to the same coset of $\Gamma$.
				Hence, $\alpha=(t+\alpha)-t\in\Gamma$.
				
				Conversely, let $t,t'\in P$ such that $\cmap(t)=\cmap(t')$.  This means that for some $k\in\Z$, $t'-t=k\alpha\in\Gamma$ since $\alpha\in\Gamma$. 
				Thus, $t'$ and $t$ are in the same coset of $\Gamma$.
			\end{proof}
			
			The following is immediate from Theorem \ref{thm:CompatibleSublattice}.			
			\begin{cor}\label{cor:CompatibleIdeal}
				Let $P$ be the set of centers of a regular tiling, $\coloring_P$ be the coloring of $P$ induced by an ideal $(\beta)$, and $\cmap(z)=\exp(2\pi i z/\alpha)$ with $\alpha$
				admissible.  Then $\coloring_P$ is compatible with $\cmap$ if and only if $(\alpha)\subseteq(\beta)$.
			\end{cor}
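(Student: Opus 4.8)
The plan is to apply Theorem~\ref{thm:CompatibleSublattice} directly with the sublattice $\Gamma=(\beta)$, and then translate the membership condition $\alpha\in(\beta)$ into the ideal inclusion $(\alpha)\subseteq(\beta)$. First I would note that the hypothesis of Theorem~\ref{thm:CompatibleSublattice} is met in all three cases: for the $(4^4)$ and $(6^3)$ tilings, $P=\Z[i]$ or $P=\Z[\omega]$, so $(\beta)$ is literally a sublattice of $P$ and the coloring induced by the ideal $(\beta)$ is the coloring induced by it as a sublattice; for the $(3^6)$ tiling, $(\beta)$ is a sublattice of $\Z[\omega]$ and, by the convention set in Subsection~5.1, the coloring $\coloring_P$ induced by $(\beta)$ is the restriction to $P$ of the coloring of $\Z[\omega]$ induced by $(\beta)$, which is again the coloring of $P$ induced by the sublattice $(\beta)$. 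In every case, Theorem~\ref{thm:CompatibleSublattice} then gives that $\coloring_P$ is compatible with $\cmap$ if and only if $\alpha\in(\beta)$.

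The remaining step is the elementary observation that $\alpha\in(\beta)$ is equivalent to $(\alpha)\subseteq(\beta)$: if $\alpha\in(\beta)$ then, since $(\beta)$ is an ideal of $\Z[\xi]$ (with $\xi=i$ for the $(4^4)$ tiling and $\xi=\omega$ otherwise), it absorbs multiplication, so $(\alpha)=\alpha\Z[\xi]\subseteq(\beta)$; conversely $\alpha\in(\alpha)$, so $(\alpha)\subseteq(\beta)$ forces $\alpha\in(\beta)$. Chaining this equivalence with the conclusion of the previous paragraph yields the corollary. There is no real obstacle here—the statement is genuinely immediate from Theorem~\ref{thm:CompatibleSublattice}—and the only point that warrants a sentence of care is confirming that ``the coloring of $P$ induced by an ideal $(\beta)$'' coincides with ``the coloring of $P$ induced by the sublattice $(\beta)$'' in the $(3^6)$ case, which is exactly the restriction construction already fixed earlier in the section.
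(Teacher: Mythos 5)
Your proposal is correct and matches the paper, which states the corollary as immediate from Theorem~\ref{thm:CompatibleSublattice}: one takes $\Gamma=(\beta)$ and observes that $\alpha\in(\beta)$ is equivalent to $(\alpha)\subseteq(\beta)$. Your extra sentence confirming that the ideal-induced coloring of $P$ in the $(3^6)$ case is exactly the restricted sublattice coloring is a reasonable point of care but does not change the argument.
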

			
			Hence, given $\cmap$, we can identify all ideal colorings of $P$ that are compatible with $\cmap$ by looking at the prime decomposition of $\alpha$ in $\Z[\xi]$, 
			where $\xi=i$ if $\tiling$ is the $(4^4)$ tiling and $\xi=\omega$ if $\tiling$ is the $(6^3)$ or $(3^6)$ tiling.
		
		\subsection{Perfect Colorings of $\stiling$}		
			Let $\coloring_P$ be a coloring of $P$ that is compatible with $\cmap$. 
			We now give a necessary and sufficient condition for $g\in\sg(\stiling)$ to be a color symmetry of the coloring $\cmap(\coloring_P)$ of $\qalpha$.
			
			\begin{thm}\label{lem:CorrespondPermutes}
				Let $P$ be the set of centers of a regular tiling, and $\cmap(z)=\exp(2\pi i z/\alpha)$ with $\alpha$ admissible. 
				Suppose a coloring $\coloring_P$ of $P$ is compatible with $\cmap$.
				If $g\in\sg(\stiling)$ and $f\in\sg(\tiling)$ corresponds to $g$,
				then $g$ is a color symmetry of the coloring $\cmap(\coloring_P)$ of $\qalpha$ if and only if $f$ is a color symmetry of $\coloring_P$.
			\end{thm}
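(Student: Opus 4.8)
The plan is to leverage the correspondence identity $g\cmap=\cmap f$ against the defining property of compatibility. Write $\coloring_P=\{X_i\}_{i=1}^m$; since $\coloring_P$ is compatible with $\cmap$, the family $\cmap(\coloring_P)=\{\cmap(X_i)\}_{i=1}^m$ is an honest coloring of $\qalpha$, which forces the sets $\cmap(X_i)$ to be pairwise disjoint and hence all distinct. The single fact I will extract from compatibility is this: if $t,t'\in P$ satisfy $\cmap(t)=\cmap(t')$, then $t$ and $t'$ lie in the same color class of $\coloring_P$. This is exactly what it means for $\cmap(\coloring_P)$ to assign each point of $\qalpha$ a single color (it is also recoverable from Lemma \ref{lem:SameImage} together with Theorem \ref{thm:CompatibleSublattice}). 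I will also use that $g\in\sg(\stiling)\subseteq\sg(\qalpha)$ by Theorem \ref{thm:SameSymmetryGroup}, so that it makes sense to speak of $g$ permuting the colors of $\cmap(\coloring_P)$.

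For the forward direction, assume $f$ is a color symmetry of $\coloring_P$, so $f(X_i)=X_{\tau(i)}$ for all $i$ and some permutation $\tau$. Then $g(\cmap(X_i))=(g\cmap)(X_i)=(\cmap f)(X_i)=\cmap(X_{\tau(i)})$, so $g$ permutes the colors of $\cmap(\coloring_P)$ via the same $\tau$, and hence $g$ is a color symmetry. This direction is a one-line diagram chase.

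For the converse, assume $g$ is a color symmetry of $\cmap(\coloring_P)$, with associated permutation $\tau$, that is, $g(\cmap(X_i))=\cmap(X_{\tau(i)})$ for all $i$. Combining with $g\cmap=\cmap f$ gives $\cmap(f(X_i))=\cmap(X_{\tau(i)})$. Now fix $i$ and $t\in X_i$. Then $\cmap(f(t))\in\cmap(f(X_i))=\cmap(X_{\tau(i)})$, so $\cmap(f(t))=\cmap(t')$ for some $t'\in X_{\tau(i)}$; since $f(t),t'\in P$, the compatibility fact above gives that $f(t)$ has color $\tau(i)$, i.e. $f(t)\in X_{\tau(i)}$. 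Thus $f(X_i)\subseteq X_{\tau(i)}$ for every $i$. To upgrade these inclusions to equalities I will either run the same argument on $g^{-1}$ (which corresponds to $f^{-1}$ and is again a color symmetry, since color symmetries form a group) to obtain $f^{-1}(X_{\tau(i)})\subseteq X_i$, or simply observe that $f$ is a bijection of $P$ and both $\{X_i\}_{i=1}^m$ and $\{X_{\tau(i)}\}_{i=1}^m$ partition $P$, so none of the inclusions can be strict. Either way $f(X_i)=X_{\tau(i)}$ for all $i$, so $f$ is a color symmetry of $\coloring_P$.

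The only genuinely delicate point is this converse step: because $\cmap$ is highly non-injective, the identity $\cmap(f(X_i))=\cmap(X_{\tau(i)})$ a priori says only that $f(X_i)$ and $X_{\tau(i)}$ have the same image downstairs, not that they coincide inside $P$. Compatibility is precisely the hypothesis that forbids two differently colored points of $P$ from collapsing to one point of $\qalpha$, and that is exactly what lets me transfer the equality back upstairs. Everything else — the partition count and the forward implication — is routine, and since the argument never uses the precise shape of the color classes, only that they partition $P$, it covers the $(3^6)$ case (where the classes are partial cosets) with no change.
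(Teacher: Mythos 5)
Your proof is correct and follows essentially the same route as the paper's: the easy direction is the same one-line chase via $g\cmap=\cmap f$, and the delicate direction uses compatibility to lift the equality $\cmap(f(X_i))=\cmap(X_{\tau(i)})$ back to $P$. If anything, your pointwise argument plus the partition count is slightly more careful than the paper's shortcut, which writes $f(X_i)=X_j+k\alpha$ with a single $k$ drawn from Lemma \ref{lem:SameImage} (implicitly leaning on the coset structure of sublattice colorings), whereas your version works verbatim for any compatible coloring.
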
			
			\begin{proof}
				Suppose $g$ is a color symmetry of $\cmap(\coloring_P)$. 
				If $X_i\in\coloring_P$, then $\cmap(f(X_i))=g(\cmap(X_i))=\cmap(X_j)$ for some $X_j\in\coloring_P$.
				It follows from Lemma \ref{lem:SameImage} that $f(X_i)=X_j+k\alpha$ for some $k\in\Z$. 
				Since $\cmap$ is compatible with $\coloring_P$ and $\cmap(X_j)=\cmap(X_j+k\alpha)$, we have $f(X_i)=X_j$.  
				Thus, $f$ permutes the colors of $\coloring_P$.
				
				In the other direction, let $f$ be a color symmetry of $\coloring_P$.  If $X_i\in\coloring_P$ then $g(\cmap(X_i))=\cmap(f(X_i))=\cmap(X_{j})$ for some $
				X_j\in\coloring_P$. Thus, $g$ permutes the colors of $\cmap(\coloring_P)$.				
			\end{proof}
			
			Theorem \ref{lem:CorrespondPermutes} provides a link between color symmetries of colorings of $P$ and symmetries of $\stiling$ that permute the colors of 
			the corresponding coloring of $\qalpha$.  The next corollary follows from Theorem \ref{thm:PerfectT}, Theorem \ref{thm:SymmGroupofS}, and Theorem \ref{lem:CorrespondPermutes}.
			
			\begin{cor}\label{cor:PerfectColoring}
				Let $P$ be the set of centers of a regular tiling, $\cmap(z)=\exp(2\pi i z/\alpha)$ with $\alpha$ admissible, and $\coloring_P$ be the coloring of $P$ induced by an
				ideal $(\beta)$ that is compatible with $\cmap$.  
				\begin{enumerate}[\emph{(}a\emph{)}]
					\item Then the elements of $\rg(\stiling)$ are color symmetries of the coloring $\cmap(\coloring_P)$ of $\qalpha$.
					
					\item If $\alpha$ and $\beta$ are balanced, then the elements of $\sg(\stiling)$ are color symmetries of $\cmap(\coloring_P)$.
				\end{enumerate}
			\end{cor}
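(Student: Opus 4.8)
The plan is to assemble three earlier results: the classification of chirally and fully perfect ideal colorings (Theorem~\ref{thm:PerfectT}, together with Theorem~\ref{thm:perfectcoloringP} to carry it over from $\Z[\omega]$ to the point set $P$ in the $(3^6)$ case), the description of $\sg(\stiling)$ via corresponding symmetries of $\tiling$ (Theorem~\ref{thm:SymmGroupofS} and the remark following it, which guarantees that every $g\in\sg(\stiling)$ admits an $f\in\sg(\tiling)$ corresponding to it), and the dictionary between color symmetries of $\coloring_P$ and color symmetries of $\cmap(\coloring_P)$ provided by Theorem~\ref{lem:CorrespondPermutes}. Throughout, let $\xi=i$ if $\tiling$ is the $(4^4)$ tiling and $\xi=\omega$ otherwise.

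For part~(a), I would first note that since $\coloring_P$ is induced by an ideal $(\beta)$, it is chirally perfect: if $\tiling$ is the $(4^4)$ or $(6^3)$ tiling this is immediate from Theorem~\ref{thm:PerfectT} because $P=\Z[\xi]$, and if $\tiling$ is the $(3^6)$ tiling it follows from Theorem~\ref{thm:PerfectT} combined with Theorem~\ref{thm:perfectcoloringP}. Hence $\rg(\tiling)\subseteq\mathcal{H}_{\coloring_P}$. Now take $g\in\rg(\stiling)$ and let $f\in\sg(\tiling)$ correspond to $g$, i.e.\ $g\cmap=\cmap f$. Since $\cmap$ is conformal, hence orientation-preserving, this relation forces $f$ to be orientation-preserving as well, so $f\in\rg(\tiling)\subseteq\mathcal{H}_{\coloring_P}$; that is, $f$ is a color symmetry of $\coloring_P$. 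Theorem~\ref{lem:CorrespondPermutes} (whose compatibility hypothesis holds here since $(\beta)$ induces $\coloring_P$ and $\coloring_P$ is assumed compatible with $\cmap$) then yields that $g$ is a color symmetry of $\cmap(\coloring_P)$, proving~(a). Alternatively, one can bypass the orientation remark by recalling from the proof of Theorem~\ref{thm:SymmGroupofS} that every element of $\rg(\stiling)=\langle g_1\rangle$ has a corresponding symmetry of $\tiling$ which is a \emph{translation}, and translations are always color symmetries of a lattice coloring.

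For part~(b), suppose in addition that $\alpha$ and $\beta$ are balanced. Since $\beta$ is balanced, Theorem~\ref{thm:PerfectT} (again combined with Theorem~\ref{thm:perfectcoloringP} when $\tiling$ is the $(3^6)$ tiling) shows that $\coloring_P$ is fully perfect, i.e.\ $\mathcal{H}_{\coloring_P}=\sg(\tiling)$. Since $\alpha$ is balanced, Theorem~\ref{thm:SymmGroupofS}(b) gives $\sg(\stiling)=\langle g_1,g_2\rangle\cong D_n$. Now for any $g\in\sg(\stiling)$, pick $f\in\sg(\tiling)$ corresponding to $g$; then $f\in\sg(\tiling)=\mathcal{H}_{\coloring_P}$, so $f$ is a color symmetry of $\coloring_P$, and Theorem~\ref{lem:CorrespondPermutes} gives that $g$ is a color symmetry of $\cmap(\coloring_P)$. (If one prefers to argue on generators: $g_1$ and all of $\rg(\stiling)$ are handled by part~(a), and a symmetry of $\tiling$ corresponding to $g_2(z)=\bar z$ is the reflection $f_2(z)=\epsilon\bar z$ with $\epsilon$ a unit of $\Z[\xi]$, which lies in $\sg(\tiling)=\mathcal{H}_{\coloring_P}$; since $\mathcal{H}_{\cmap(\coloring_P)}$ is a group it then contains $\langle g_1,g_2\rangle=\sg(\stiling)$.)

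I do not expect a genuine obstacle here; the work is essentially organizing the earlier lemmas and keeping track of the three cases of $\tiling$. The one point that requires a short but real justification is that a symmetry of $\tiling$ corresponding to a \emph{direct} symmetry of $\stiling$ may itself be taken direct — which I would settle either by the orientation-preservation of $\cmap$ as above, or by pointing to the explicit translations exhibited in the proof of Theorem~\ref{thm:SymmGroupofS}.
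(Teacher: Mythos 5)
Your proposal is correct and matches the paper's intent: the paper proves this corollary simply by citing Theorem~\ref{thm:PerfectT}, Theorem~\ref{thm:SymmGroupofS}, and Theorem~\ref{lem:CorrespondPermutes} (with Theorem~\ref{thm:perfectcoloringP} implicitly covering the $(3^6)$ case), which is exactly the assembly you carry out. Your extra care in showing that a symmetry of $\tiling$ corresponding to a direct symmetry of $\stiling$ can be taken direct (via orientation-preservation of $\cmap$, or via the explicit translations in the proof of Theorem~\ref{thm:SymmGroupofS}) is a sound filling-in of the detail the paper leaves tacit.
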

			
			We should note here that not all chirally and fully perfect colorings of $\stiling$ arise from ideal colorings of $P$.  In fact, every coloring
			of $P$ induced by a sublattice $\Gamma$ that contains $\alpha$ will yield a chirally perfect coloring of $\stiling$.
			
			The next corollary confirms the observation in \cite{Luck} that there is an upper bound for the number of colors in chirally perfect colorings of $\stiling$ obtained from 
			ideal colorings of $P$.  
			
			\begin{cor}\label{cor:MaxIdealColoring}
				Let $P$ be the set of centers of a regular tiling, and $\cmap(z)=\exp(2\pi i z/\alpha)$ with $\alpha$ admissible.
				Suppose $\coloring_P$ is an ideal coloring of $P$ that is compatible with $\cmap$.
				Then there is an upper bound $M$ for the number of colors in the coloring $\cmap(\coloring_P)$ of $\qalpha$.  In particular, 
				\begin{enumerate}[\emph{(}a\emph{)}]
					\item if $\tiling$ is the $(4^4)$ or $(6^3)$ tiling, then $M=|\alpha|^2$.
					
					\item if $\tiling$ is the $(3^6)$ tiling, then $M=|\alpha|^2$ if $\alpha\notin P_0=(2+\omega)$ and $M=\frac{2}{3}|\alpha|^2$ if $\alpha\in P_0$. 
				\end{enumerate}
				Moreover, there exists a coloring of $\qalpha$ derived from some ideal coloring of $P$ that has exactly $M$ colors which are permuted by elements
				of $\sg(\stiling)$.
			\end{cor}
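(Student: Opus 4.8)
The plan is to reduce everything to counting cosets. The first observation is that when a coloring $\coloring_P=\{X_i\}_{i=1}^m$ of $P$ is compatible with $\cmap$, the coloring $\cmap(\coloring_P)=\{\cmap(X_i)\}_{i=1}^m$ of $\qalpha$ again has exactly $m$ colors: each $\cmap(X_i)$ is nonempty, and by the very definition of compatibility the sets $\cmap(X_i)$ are pairwise disjoint, so no two colors are merged. Hence bounding the number of colors of $\cmap(\coloring_P)$ is the same as bounding the color index of $\coloring_P$, taken over all ideal colorings of $P$ compatible with $\cmap$.

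Now suppose $\coloring_P$ is induced by an ideal $(\beta)$ — of $\Z[i]$ if $\tiling$ is the $(4^4)$ tiling, and of $\Z[\omega]$ if $\tiling$ is the $(6^3)$ or $(3^6)$ tiling. By Corollary \ref{cor:CompatibleIdeal}, compatibility with $\cmap$ is equivalent to $(\alpha)\subseteq(\beta)$, i.e.\ $\beta\mid\alpha$ in the relevant ring, so in particular $|\beta|^2\mid|\alpha|^2$. If $\tiling$ is the $(4^4)$ or $(6^3)$ tiling, then $P$ is the whole lattice and the color index of $\coloring_P$ is $|\beta|^2\le|\alpha|^2$, which gives $M=|\alpha|^2$. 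If $\tiling$ is the $(3^6)$ tiling, then by Lemma \ref{thm:No.ofColors(3^6)} the color index of $\coloring_P$ equals $|\beta|^2$ when $(\beta)\not\subseteq P_0$ and $\tfrac23|\beta|^2$ when $(\beta)\subseteq P_0$; moreover admissibility forces $\alpha\in\Lambda=P_0=(2+\omega)$ (so the first alternative in part (b) does not actually arise). I then split into cases. If $(\beta)\subseteq P_0$, the color index is $\tfrac23|\beta|^2\le\tfrac23|\alpha|^2$. If $(\beta)\not\subseteq P_0$, then the ramified prime $2+\omega$ does not divide $\beta$; since $2+\omega$ is prime it is coprime to $\beta$, so $\beta\mid\alpha$ and $(2+\omega)\mid\alpha$ give $(2+\omega)\beta\mid\alpha$, whence $3|\beta|^2\le|\alpha|^2$ and the color index $|\beta|^2\le\tfrac13|\alpha|^2<\tfrac23|\alpha|^2$. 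Either way the color index is at most $M=\tfrac23|\alpha|^2$ (an integer, since $|\alpha|^2=3|\alpha/(2+\omega)|^2$).

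For the final assertion — attainability of $M$ by a coloring whose colors are permuted by all of $\sg(\stiling)$ — I would simply take $\beta=\alpha$. Then $(\alpha)$ is an ideal with $(\alpha)\subseteq(\alpha)$, so the induced coloring $\coloring_P$ of $P$ is compatible with $\cmap$ by Corollary \ref{cor:CompatibleIdeal}; its color index is $|\alpha|^2$ for the $(4^4)$ and $(6^3)$ tilings and, since $\alpha\in P_0$, equals $\tfrac23|\alpha|^2$ for the $(3^6)$ tiling by Lemma \ref{thm:No.ofColors(3^6)}(a) — that is, exactly $M$, and this count is preserved in $\cmap(\coloring_P)$ by the first paragraph. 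Finally, if $\alpha$ is balanced, Corollary \ref{cor:PerfectColoring}(b) (with $\beta=\alpha$) shows every element of $\sg(\stiling)$ permutes these $M$ colors; and if $\alpha$ is not balanced, then $\sg(\stiling)=\rg(\stiling)$ by Theorem \ref{thm:SymmGroupofS}(a), so Corollary \ref{cor:PerfectColoring}(a) already gives the conclusion.

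The only part requiring care is the $(3^6)$ bookkeeping in the second paragraph, where the factor-of-$\tfrac23$ drop in the color count interacts with divisibility by the ramified prime $2+\omega$; otherwise the proof is a direct chain of the compatibility criterion (Corollary \ref{cor:CompatibleIdeal}), the color-count lemma (Lemma \ref{thm:No.ofColors(3^6)}), and the perfect-coloring corollary (Corollary \ref{cor:PerfectColoring}), so I do not anticipate a real obstacle.
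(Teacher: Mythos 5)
Your proposal is correct and follows essentially the same route as the paper: use Corollary \ref{cor:CompatibleIdeal} to force $\beta\mid\alpha$, count colors via the index of $(\beta)$ and Lemma \ref{thm:No.ofColors(3^6)}, and get attainability at $\beta=\alpha$ from Corollary \ref{cor:PerfectColoring}. The only difference is that you spell out details the paper leaves implicit (the divisibility bookkeeping showing divisors $\beta$ with $(2+\omega)\nmid\beta$ still give at most $\tfrac23|\alpha|^2$ colors in the $(3^6)$ case, and the balanced/unbalanced split reducing the last claim to parts (a) or (b) of Corollary \ref{cor:PerfectColoring}), which is a welcome refinement rather than a different argument.
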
			
			\begin{proof}
				It follows from Corollary \ref{cor:CompatibleIdeal} that the ideal that satisfies the compatibility condition and gives the most number of colors is $(\alpha)$. 
				Then $M=[\Z[\xi]:(\alpha)]=|\alpha|^2$ if $\tiling$ is the $(4^4)$ or $(6^3)$ tiling. 
				Lemma \ref{thm:No.ofColors(3^6)} gives us the value of $M$ if $\tiling$ is the $(3^6)$ tiling. 
				Finally, the elements of $\sg(\stiling)$ permute the colors in $\cmap(\coloring_P)$ obtained from $\coloring_P$ induced by $(\alpha)$ by
				Corollary~\ref{cor:PerfectColoring}.			
			\end{proof}
			
			However, unlike colorings of $\qalpha$ obtained from ideal colorings of $P$, there is no upper bound for the number of colors of colorings of $\qalpha$ obtained from 
			sublattice colorings of $P$. 
		
	\section{Examples}\label{sec:Examples}	
		We end this paper with examples of chirally and fully perfect colorings of tilings with a singular point obtained from ideal colorings of lattices associated with 
		regular tilings.
	
		\subsection*{$(4^4)$ Tiling}
			Let $\alpha=-5+5i$.
			Then $\stiling$ is a Class 2 tiling, and $\sg(\stiling)\cong D_5$ is generated by $g_1(z)=\exp(2\pi i/5)z$ and $g_2(z)=\bar{z}$.  
			The symmetries $g_1$ and $g_2$ of $\stiling$ correspond to the symmetries
			$f_{1}(z)=z+(-1+i)$ and  $f_{2}(z)=iz$ of $\tiling$, respectively.
			Ideal colorings of $P=\Z[i]$ compatible with $\cmap$ are induced by the ideals $(1)$, $(-1+i)$, $(1+2i)$, $(1-2i)$, $(1+3i)$, $(3+i)$, $(5)$ and $(-5+5i)$.
			The colorings of $\stiling$ that correspond to the colorings of $P$ induced by the ideals $(1)$, $(-1+i)$, $(5)$, and $(-5+5i)$ are fully perfect.
			The maximum color index is $50$.  Figure \ref{fig:(4^4)Example1a} shows a chirally perfect coloring of $\stiling$ obtained from the coloring of $P$ induced by 
			$(2+i)$.
								
			Figure \ref{fig:(4^4)Example2} shows a coloring of a Class 3 tiling $\stiling$, where $\alpha=4$, obtained from the coloring of $P$ induced by $(2)$. 
			Since $2$ is balanced, the coloring is fully perfect. 
						
			\begin{figure}[ht]
				\includegraphics[height=4.5cm]{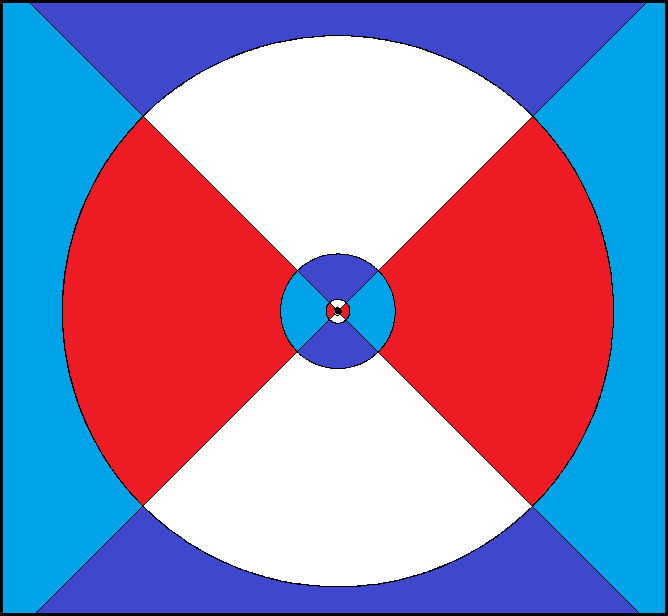}
				\caption{A fully perfect coloring of $\stiling$, with $\alpha=4$, obtained from the coloring of $P$ induced by $(2)$}
				\label{fig:(4^4)Example2}
			\end{figure}
	
		\subsection*{$(6^3)$ Tiling}
			If $\alpha=6$, then $\stiling$ is a Class 3 tiling.
			The generators of $\sg(\stiling)\cong D_6$ are $g_1(z)=\exp(\pi i/3) z$ and $g_2(z)=\bar{z}$, which correspond to the symmetries $f_1(z)=z+1$ and $f_2(z)=-\bar{z}$
			of $\tiling$, respectively.
			The ideals $(1)$, $(2+\omega)$, $(2)$, $(3)$, $(2+4\omega)$, and $(6)$ induce colorings of $P=\Z[\omega]$ that are compatible with $\cmap$.
			All resulting colorings of $\stiling$ are perfect, and the maximum color index is $36$.
			Figure \ref{fig:(6^3)Example1} shows the perfect colorings of $\tiling$ and $\stiling$ induced by $(2)$.
			
			\begin{figure}[ht]
				\begin{subfigure}[c]{0.3\linewidth}
					\includegraphics[height=4.5cm]{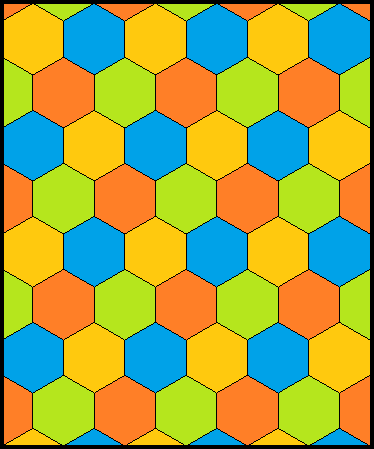}
				\end{subfigure}
				\begin{subfigure}[c]{0.5\linewidth}
					\includegraphics[height=4.5cm]{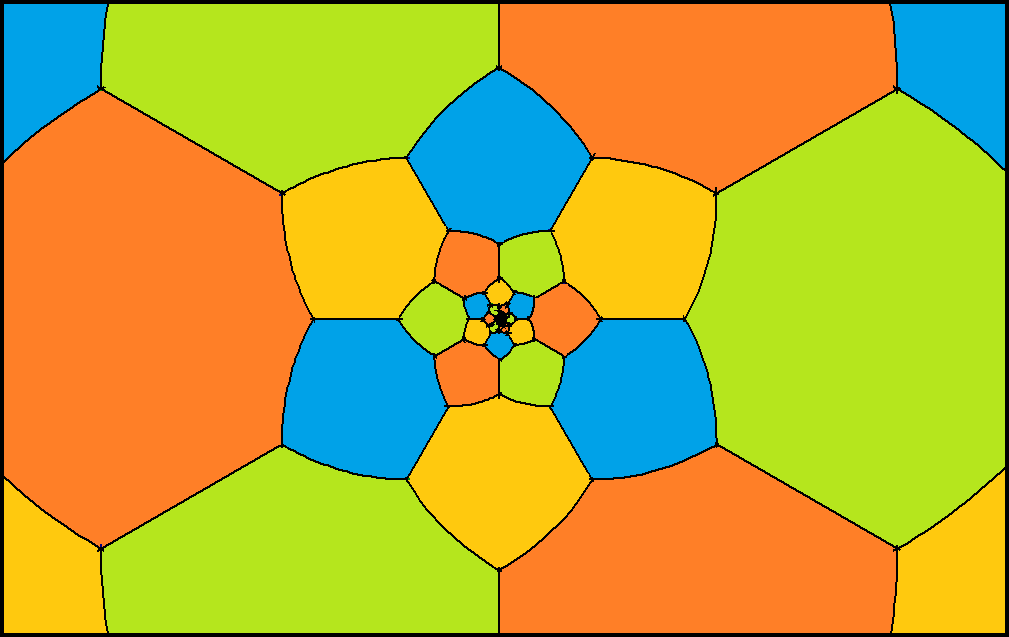}
				\end{subfigure}
				\caption{Perfect colorings of $\tiling$ and $\stiling$, with $\alpha=6$, induced by $(2)$}
				\label{fig:(6^3)Example1}
			\end{figure}			
			
			In Figure \ref{fig:(6^3)Example2}, we have a coloring of a Class 1 tiling $\stiling$ with $\alpha=10+2\omega$.  
			The coloring is perfect and is induced by the ideal $(1+3\omega)$. 
			
			\begin{figure}[ht]
				\includegraphics[height=4.5cm]{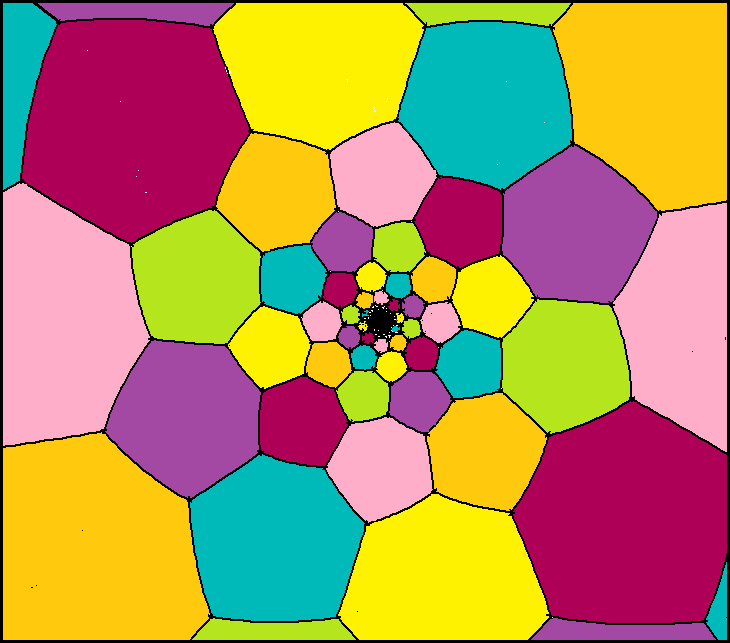}
				\caption{A perfect coloring of $\stiling$, with $\alpha =10+2\omega$, obtained from a coloring of $P$ induced by $(1+3\omega)$}
				\label{fig:(6^3)Example2}
			\end{figure}
	
		\subsection*{$(3^6)$ Tiling}	
			We obtain a Class 1 tiling $\stiling$ when $\alpha=2+10\omega=(4+6\omega)(2+\omega)$.  The symmetry group of $\stiling$ is isomorphic to $C_2$, with generator $g_1(z)=-z$
			that corresponds to the symmetry $f(z)=z+(1+5\omega)$ of $\tiling$.
			Colorings of $P=\Z[\omega]\setminus (2+\omega)$ compatible with $\cmap$ are induced by the ideals $(1)$, $(1+2\omega)$, $(2)$, $(2+3\omega)$, $(2+4\omega)$, $(1+5\omega)$, 
			$(4+6\omega)$, and $(2+10\omega)$.  The maximum color index is $56$.   
			Figure \ref{fig:(3^6)Example1} shows the chirally perfect coloring of $\mathscr{T}$ and the fully perfect coloring of $\stiling$ induced by $(2+3\omega)$. 
			
			\begin{figure}[ht]
				\begin{subfigure}[c]{0.3\linewidth}
					\includegraphics[height=4.5cm]{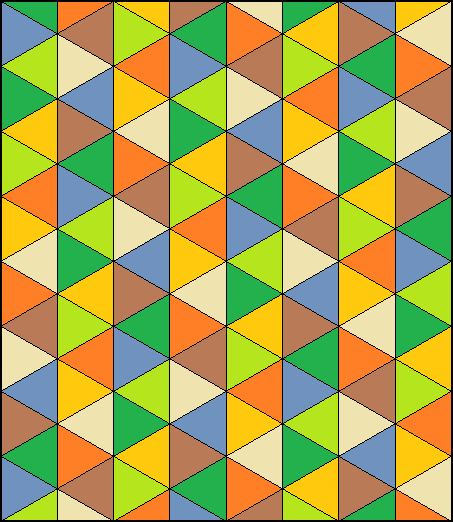}
				\end{subfigure}
				\begin{subfigure}[c]{0.6\linewidth}
					\includegraphics[height=4.5cm]{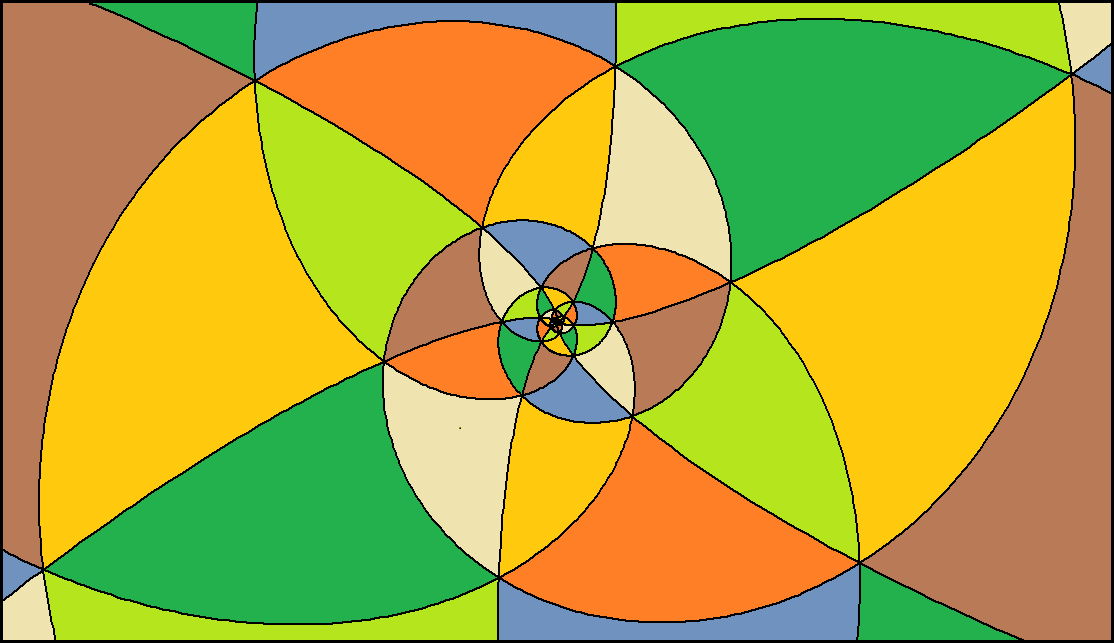}
				\end{subfigure}
				\caption{A perfect coloring of $\stiling$, with $\alpha =2+10\omega$, obtained from a coloring of $P$ induced by the ideal $(2+3\omega)$}
				\label{fig:(3^6)Example1}
			\end{figure}
			
			Figure \ref{fig:(3^6)Example2} shows the fully perfect coloring of a Class 2 tiling $\stiling$, where $\alpha =-5+5\omega=5\omega(2+\omega)$, obtained from a coloring of 
			$P$ induced by $(1+2\omega)$.
					
			\begin{figure}[ht]
				\includegraphics[height=4.5cm]{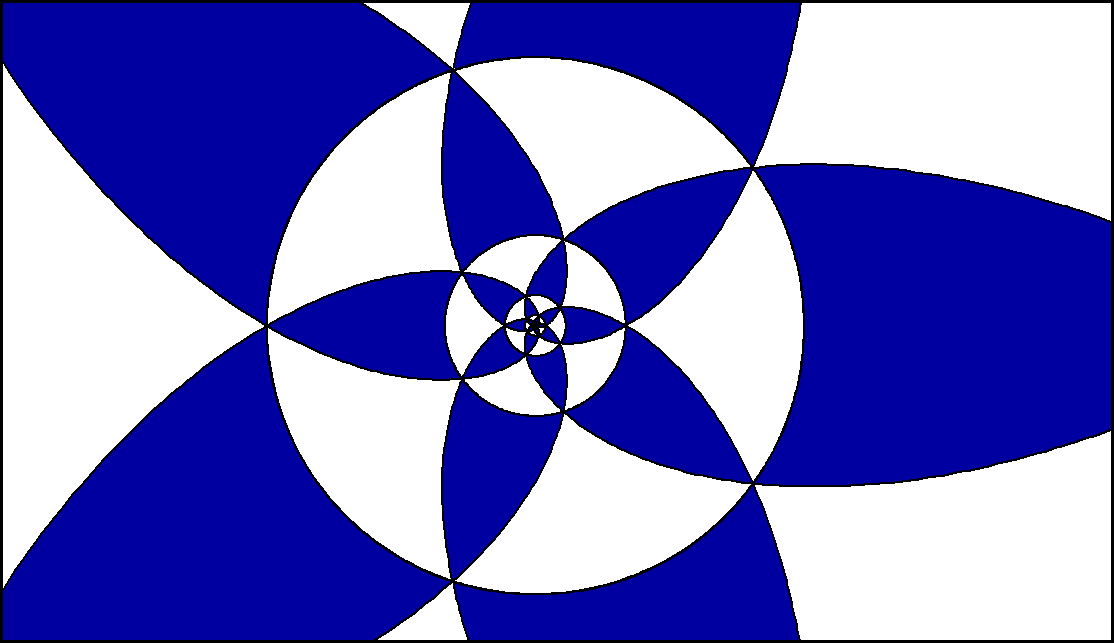}
				\caption{A fully perfect coloring of $\stiling$, with $\alpha =-5+5\omega$, obtained from a coloring of $P$ induced by $(1+2\omega)$}
				\label{fig:(3^6)Example2}
			\end{figure}

\end{document}